\theoremstyle{plain}
\newtheorem{theorem}{Theorem}[section]
\newtheorem*{theorem*}{Theorem}
\newtheorem{proposition}[theorem]{Proposition}
\newtheorem{corollary}[theorem]{Corollary}
\newtheorem{lemma}[theorem]{Lemma}
\theoremstyle{definition}
\newtheorem{definition}[theorem]{Definition}
\newtheorem{notation}[theorem]{Notation}
\newtheorem{assumption}[theorem]{Assumption}
\newtheorem{remark}[theorem]{Remark}
\newcommand{\enm}[1]{\ensuremath{#1}}          %
\renewcommand{\bar}[1]{\overline{#1}}
\newcommand{\CC}{\enm{\mathbb{C}}}
\renewcommand{\imath}{\mathrm{i}}
\renewcommand{\phi}{\Phi}
\renewcommand{\theta}{\vartheta}
\renewcommand{\epsilon}{\varepsilon}
\newcommand{\Jint}{\mathcal{J}\mspace{-20mu}\int  }
\def\8{\infty}
\def\ge{\geqslant}
\def\Re{\mathop{\mathrm{Re}}}
\def\Im{\mathop{\mathrm{Im}}}
\begin{document}
\parskip1pt

\title[$q$-Fock Space of $q$-Analytic Functions]{$q$-Fock Space of $q$-Analytic Functions and its realization in $L^{2}(\mathbb{C}; e^{-z\bar z} \,\mathrm{d}x\,\mathrm{d}y)$}

\author[A. Altavilla]{Amedeo Altavilla}\address{Dipartimento di Matematica,
  Universit\`a degli Studi di Bari Aldo Moro, via Edoardo Orabona, 4, 70125,
  Bari, Italia}\email{amedeo.altavilla@uniba.it}

\author[S. Bernstein]{Swanhild Bernstein}\address{
Fakult\"at f\"ur Mathematik und Informatik, Institut f\"ur Angewandte Analysis,
TU Bergakademie Freiberg, Pr\"uferstra\ss e, 9, 09599, Freiberg, Germany}
\email{Swanhild.Bernstein@math.tu-freiberg.de }

\author[M. L. Zimmermann]{Martha Lina Zimmermann}\address{
Fakult\"at f\"ur Mathematik und Informatik, Institut f\"ur Angewandte Analysis,
TU Bergakademie Freiberg, Pr\"uferstra\ss e, 9, 09599, Freiberg, Germany}\email{martha-lina.zimmermann@math.tu-freiberg.de}

\thanks{Amedeo Altavilla was partially supported by PRIN 2022MWPMAB - ``Interactions between Geometric Structures and Function Theories'' and by GNSAGA of INdAM}

\date{\today}

\subjclass[2020]{Primary: 46E22, 47B32; Secondary: 33D45, 81S05}
\keywords{q-Fock space; q-analyticity; Jackson calculus;  Hermite polynomials; q-special functions; reproducing kernels; creation–annihilation operators; q-Bargmann transform}

\begin{abstract}
We introduce a $q$-deformation of the Fock space of holomorphic functions on $\mathbb{C}$, based on a geometric definition of $q$-analyticity. This definition is inspired by 
a standard construction in complex differential geometry. Within this framework, we define $q$-analytic monomials $z_q^n$ and construct the associated $q$-Fock space as a Hilbert space with orthonormal basis $\{z_q^n/\sqrt{[n]_q!]}\}_{n\ge 0}$. The reproducing kernel  of this space is computed explicitly, and $q$-position and $q$-momentum operators are introduced, satisfying $q$-deformed commutation relations. We show that the $q$-monomials $z_q^n$ can be expanded in terms of complex Hermite polynomials, thereby providing a realization of the $q$-Fock space as a subspace of $L^2(\mathbb{C}; e^{-|z|^2}\,\mathrm{d}x\,\mathrm{d}y)$. Finally, we define a $q$-Bargmann transform that maps suitable $q$-Hermite functions into our $q$-Fock space and acts as a unitary isomorphism. Our construction offers a geometric and analytic approach to $q$-function theory, complementing recent operator-theoretic models.\end{abstract}

\maketitle
\setcounter{tocdepth}{1} 

\section{Introduction}
The classical Fock space of holomorphic functions on the complex plane $\CC$, equipped with a Gaussian weight, plays a fundamental role in quantum mechanics, signal processing, and complex analysis. It provides a natural setting for the Bargmann transform and coherent states, allowing the interplay of analysis, geometry, and operator theory~\cite{Barg1961}.

In recent years, several attempts have been made to define $q$-analogues of analytic structures, motivated by quantum algebra, combinatorics, and $q$-special functions (see~\cite{Ernst2012, Gasper2009, Kac2002}). The notions of $q$-derivative, Jackson integral, and $q$-exponential have led to a renewed interest in the development of a deformation of classical analysis, known as $q$-calculus. Recently, observations regarding $q$-deformations in different contexts such as Clifford analysis and quaternionic analysis were made (see~\cite{ACKS,BAZ,GonzCe,BeZiSchn2022}).

In particular, in the recent work~\cite{ACKS}, the authors study a family of $q$-deformed reproducing kernel Hilbert spaces $\mathcal{H}^2_q$, which interpolate between the classical Hardy and Fock spaces, via kernels of the form
\[
K_q(z, w) = \sum_{n=0}^\infty \frac{(z w)^n}{[n]_q!}.
\]
Their construction focuses on operator-theoretic properties, including the adjointness relations between $q$-difference operators and multiplication, and provides structural identities such as
\[
R_q M_z - q M_z R_q = I,
\]
which characterize these spaces.

Turning back to holomorphicity, it is well known that holomorphic functions can be defined in several equivalent ways. However, when one tries to generalize this concept based on just one of these definitions, it often leads to the loss of some of the other equivalent characterizations. This issue becomes particularly evident in quaternionic (or hypercomplex) analysis (see, for example,~\cite{Sudbery1979}). A similar pattern can be observed in the context of $q$-calculus. To gain a deeper understanding of this subject, we chose to explore it from a more geometric perspective, i.e., by assuming a sort of complex differentiability.

In fact, in the standard setting, a function $ f : U \subseteq \mathbb{C} \to \mathbb{C} $ is \textit{holomorphic} if and only if its differential $ \mathrm{d}f $ commutes with the action of multiplication by $ i $, meaning $ \mathrm{d}f(i\mathbf{v}) = i\,\mathrm{d}f(\mathbf{v}) $ for all $ \mathbf{v} \in \mathbb{R}^2 $. This formulation emphasizes holomorphy as a condition not merely on partial derivatives but on the geometric structure of the differential as a linear map preserving the complex structure.

In our setting, we will impose the condition
\[
\mathrm{d}_q f = \left( M_q^y D_z f \right) \mathrm{d}_q z,
\]
where $M_q^y$ is a suitable dilation operator.

Building upon this idea, we consider a $q$-deformation of the classical monomials $z^n$, denoted by $z_q^n$, which span a space of $q$-analytic functions. These functions form a basis of a $q$-Fock space defined by
\[
\mathcal{F}_q(\mathbb{C}) = \left\{ f(z) = \sum_{n=0}^\infty a_n z_q^n \;\middle|\; \sum_{n=0}^\infty |a_n|^2 [n]_q! < \infty \right\}.
\]
This space admits a natural inner product and a reproducing kernel structure analogous to the classical setting, yet deeply influenced by the $q$-deformation.

We further study $q$-position and $q$-momentum operators on $\mathcal{F}_q(\mathbb{C})$ and prove that they satisfy $q$-commutation relations deforming the canonical ones. Moreover, although the monomials $z_q^n$ are not holomorphic in the classical sense, each $z_q^n$ admits an expansion in complex Hermite polynomials $H_{p,r}(z,\bar z)$; this provides a concrete embedding (the \textit{realization} in the title) of the $q$-Fock space into $L^2(\mathbb{C}; e^{-|z|^2}\,\mathrm{d}x\,\mathrm{d}y)$.

Finally, we construct a $q$-Bargmann transform and we show that such a map is unitary. A two-variable tensor version yields an explicit coherent-state reproducing kernel and identifies the family $\{z_q^k\,\bar z_q^{\,h}\}_{k,h\ge0}$ and its span inside $L^2(\mathbb{C}; e^{-|z|^2}\,\mathrm{d}x\,\mathrm{d}y)$.

Lastly, some few words on the relation between our paper and~\cite{ACKS}.
While this paper and~\cite{ACKS}, both deal with $q$-deformations of classical spaces and involve similar structures, our approach differs significantly in the starting idea and technical formulation. In particular, \cite{ACKS} builds on standard power series in $z$, while our work uses $q$-deformed monomials $z_q^n$ based on geometric $q$-differentiability.
Moreover, we realize our $q$-Fock space inside $L^2(\mathbb{C}, e^{-|z|^2}\,\mathrm{d}x\,\mathrm{d}y)$ via expansion in complex Hermite polynomials, whereas~\cite{ACKS} considers weighted measures supported on discrete radial levels.
Finally, we introduce a $q$-Bargmann transform which maps $q$-Hermite functions into the space of $q$-analytic functions, similarly to the classical transform but adapted to the non-holomorphic $q$-setting.

\medskip
The paper is organized as follows.
In Section~\ref{preliminaries} we review Jackson $q$–calculus and introduce the complex $q$–derivatives $D_z,D_{\bar z}$ together with the $q$–analytic monomials $z_q^n$. 
Then, in Section~\ref{secHerm} we recall the $q$–Hermite polynomials and discuss some of their features: the $q$–Hermite equation, creation/annihilation identities for the $q$–Hermite polynomials and orthogonality on the Jackson $L^2$–space on $[-\lambda,\lambda]$. 
In Section~\ref{q-fockspace} we define our $q$–Fock space $\mathcal F_q(\mathbb{C})$ from the family $\{z_q^n\}$, compute its reproducing kernel $K_q$, develop the position/momentum pair and the commutation rule, and realize $\mathcal{F}_q(\mathbb{C})$ inside $L^2(\mathbb{C};e^{-|z|^2}\,\mathrm{d}x\,\mathrm{d}y)$. In the last Section we  define the $q$–Bargmann transform $\mathcal{B}_{q}$, prove that $\mathcal{B}_{q}$ is unitary, and introduce the two–variable transform $\mathcal{B}_{q}^{(2)}$.

\section{Preliminaries on $q$-Calculus}\label{preliminaries}
Here we briefly recall the standard definitions in $q$-calculus (see \cite{Ernst2012,Jackson1909,Kac2002}).
We start with the following general assumption.

\begin{assumption}
From now until the end of the article, the number $q$ is chosen to be in the open interval $(0,1)\subset\mathbb{R}$. Although many of the statements in the article also hold for other values of $q$ (real or complex), this choice is made to ensure the good position of Jackson's integral.
\end{assumption}

\subsection*{$q$-Derivatives and $q$-Numbers}

\begin{definition}
Let $\Omega \subset \mathbb{R}$ be a set such that if $ x \in \Omega $, then $ qx \in \Omega $ as well. Let $ f : \mathbb{R} \to \mathbb{R} $ be any function. The \textit{$q$-derivative} (or \textit{$q$-difference operator}) of $f$ is defined by:
\[
D_x^{q} f(x) = \frac{f(qx) - f(x)}{(q - 1)x}.
\]
\end{definition}

As an example, let $ \alpha \in \mathbb{R} $, and consider the function defined on the whole real line $ f(x) = x^{\alpha} $. Then:
\[
D_x^{q} x^{\alpha} = \frac{(qx)^{\alpha} - x^{\alpha}}{(q - 1)x} = \frac{q^{\alpha} - 1}{q - 1} x^{\alpha - 1}.
\]
This motivates the following definition:

\[
[\alpha]_q := \frac{1 - q^{\alpha}}{1 - q},
\]
so that $ D_x^{q} x^{\alpha} = [\alpha]_q x^{\alpha - 1} $ for any $ \alpha \in \mathbb{R} $. In particular, for integer values $ n \in \mathbb{N} $, this reduces to:
\[
[n]_q = 1 + q + q^2 + \dots + q^{n-1},
\]
and the corresponding $q$-factorial is defined as
\[
[n]_q! = [n]_q [n-1]_q \cdots [1]_q, \quad [0]_q! := 1.
\]
Using this, the $q$-binomial coefficients are given by:
\[
\left[\begin{matrix}n\\k\end{matrix}\right]_q = \frac{[n]_q!}{[n-k]_q! [k]_q!}.
\]

An identity that will be useful in later sections is:
\[
[n]_q - [n+1]_q = -q^n.
\]

\subsection*{$q$-Differentials}
From the definition of the $q$-derivative, the corresponding $q$-differential can be naturally introduced.

\begin{definition}
Let $\Omega \subset \mathbb{R}$ be a set such that if $ x \in \Omega $, then $ qx \in \Omega $. Let $ f : \Omega \to \mathbb{R} $ be any function. The \textit{$q$-differential} of $f$ is defined by:
\[
\mathrm{d}_q f(x) = f(qx) - f(x) = \left( D_x f(x) \right)\, \mathrm{d}_q x,
\]
where $ \mathrm{d}_q x := (q - 1)x $.
\end{definition}

The operator $ f(x) \mapsto f(qx) $ is often called the dilation operator, denoted $ M_q^x $, and acts as 
$$M_q^x f(x) = f(qx).$$

\begin{remark}
In operator form, the dilation operator is sometimes denoted as $ M_q^x = q^{x \frac{d}{dx}} $, resulting in the representation:
\[
D_x^{q} = \frac{q^{x \frac{\mathrm{d}}{dx}} - 1}{(q - 1)x}.
\]
In fact, the exponential operator $ e^{a \frac{\mathrm{d}}{\mathrm{d}x}} $ operates on a function $ f(x) $ through its formal Taylor series expansion:
\[
e^{a \frac{\mathrm{d}}{\mathrm{d}x}} f(x) = \sum_{n=0}^{\infty} \frac{a^n}{n!} \frac{\mathrm{d}^n f}{\mathrm{d}x^n}(x) = f(x + a),
\]
which translates the function by $ a $.

Similarly, the operator $ q^{x \frac{\mathrm{d}}{\mathrm{d}x}} $ is defined by:
\[
q^{x \frac{\mathrm{d}}{\mathrm{d}x}} := e^{(\ln q) x \frac{\mathrm{d}}{\mathrm{d}x}} = \sum_{n=0}^{\infty} \frac{(\ln q)^n}{n!} \left( x \frac{\mathrm{d}}{\mathrm{d}x} \right)^n,
\]
and acts on functions by performing a dilation:
\[
q^{x \frac{\mathrm{d}}{\mathrm{d}x}} f(x) = f(qx).
\]
This is because the operator $ x \frac{\mathrm{d}}{\mathrm{d}x} $ acts diagonally on monomials $ x^n $, with:
\[
x \frac{\mathrm{d}}{\mathrm{d}x} (x^n) = n x^n,
\]
and hence:
\[
e^{(\ln q) x \frac{\mathrm{d}}{\mathrm{d}x}} x^n = q^n x^n = (qx)^n.
\]
\end{remark}

For any two functions $ f, g : \Omega \to \mathbb{R} $ and any scalar $ \lambda \in \mathbb{R} $, the operator $D_{x}$ is linear:
\[
D_x(\lambda f + g)(x) = \lambda D_x f(x) + D_x g(x).
\]
Moreover, the $q$-derivative satisfies two distinct versions of the Leibniz product rule:
\begin{align*}
D_x(fg)(x) &= (D_x f(x)) g(x) + f(qx) D_x g(x), \\
           &= (D_x f(x)) g(qx) + f(x) D_x g(x).
\end{align*}
However, a general chain rule does not hold in this framework.

\subsection{$q$-Exponential Function}
A natural extension of the classical exponential function within the framework of $q$-calculus is the Jackson exponential.

\begin{definition}
For $ x \in \left(-\frac{1}{1 - q}, \frac{1}{1 - q}\right) $, the \textit{Jackson $q$-exponential} function is defined as
\[
E_q(x) := \sum_{k=0}^{\infty} \frac{x^k}{[k]_q!}.
\]
\end{definition}

\begin{remark}
The series $ E_q(x) $ converges absolutely and uniformly for $ |x| < \frac{1}{1 - q} $ when $ 0 < q < 1 $, and it converges for all $ x \in \mathbb{R} $ when $ q > 1 $ (see~\cite[Section 6.8]{Ernst2012}).
\end{remark}

A second $q$-exponential function is defined via an inverse relationship:
\[
e_q(x) := E_q^{-1}(x) = \sum_{j=0}^{\infty} q^{\frac{1}{2}j(j-1)} \frac{x^j}{[j]_q!}.
\]
These functions satisfy the identity (see~\cite{CoSo2010})
\[
E_q(t)\, e_q(-t) = 1,
\]
and their $q$-derivatives are given by:
\[
D_x^{q} E_q(x) = E_q(x), \quad D_x^{q} e_q(x) = e_q(qx).
\]
It is shown in~\cite[Lemma 11]{CoSo2010} that, for $ 0 < q < 1 $ and any $ k \in \mathbb{N} $,
\begin{equation}\label{convexp}
e_q\left( \frac{q^{-k}}{q - 1} \right) = 0.
\end{equation}

\subsection{Jackson Integral}
Jackson introduced a notion of integration that serves as an inverse operation to $q$-differentiation~\cite{Jackson1910}.
\begin{definition}
For $ 0 < q < 1 $, the \textit{Jackson $q$-integral} over the interval $ [0, a] $ is defined as:
\[
\Jint_0^a f(t)\, \mathrm{d}_q t = (1 - q) a \sum_{k=0}^{\infty} f(a q^k) q^k.
\]
\end{definition}
Integration over a general interval $ [a, b] $ is then defined by:
\[
\Jint_a^b f(t)\, \mathrm{d}_q t := \Jint_0^b f(t)\, \mathrm{d}_q t - \Jint_0^a f(t)\, \mathrm{d}_q t,
\]
and satisfies the $q$-analog of the Fundamental Theorem of Calculus:
\[
\Jint_a^b D_x f(x)\, \mathrm{d}_q x = f(b) - f(a).
\]

\begin{remark}
The assumption $ 0 < q < 1 $ ensures the convergence of the infinite sum defining the Jackson integral. For $ q > 1 $, this sum may diverge.
\end{remark}

\subsection{$q$-Analytic Functions}
We introduce the concept of $q$-analyticity, drawing inspiration from~\cite{Pashaev2014,Turner2016}.
Prior to this definition, we establish a family of sets within which this notion becomes pertinent.

\begin{definition}
Let $\Omega$ be a subset of $\mathbb{R}^{2}\simeq\mathbb{C}$. We will say that $\Omega$ is a $(q,q^{-1})$\textit{-invariant} set if, for any $(x,y)\in\Omega$, then $\{(qx,y),(x,q^{-1}y)\}\subset \Omega$.
\end{definition}

The whole set $\mathbb{C}$ is clearly a $(q,q^{-1})$\textit{-invariant} set. Other basic examples are the half-plane $\mathbb{C}^{+}=\{z\in\mathbb{C}\,|\, \Im(z)>0\}$, as well as $\{z\in\mathbb{C}\,|\, \Im(z)>0,\,\Re(z)>0\}$.

\begin{remark}
We want to show how to construct non-trivial families of $(q,q^{-1})$-invariant set .
Consider $(x,y)\in\mathbb{R}^2\cong\mathbb{C}$. We denote $(x,y)$ as the generating point of the set $\Omega_{(x,y)}$. By induction, we establish that the points $(qx,y)$ and $(x,q^{-1}y)$ are also members of $\Omega_{(x,y)}$. Consequently, we conclude that $\Omega_{(x,y)}=\{ (q^{b_1}x, q^{-b_2}y), b_1, b_2 \in \mathbb{N}\cup\{(0,0)\}\}$. Therefore, the set $\Omega_{(x,y)}$ is a grid generated by the point $(x,y)$ and the parameter $q$. Evidently, $\Omega=\cup_{k=1}^n\Omega_{(x_k,y_k)}$ satisfies the aforementioned condition. 

In Figure~\ref{fig1}, the reader can observe a portion of a set generated from the set of points \[S=\{(9.8,1.2),(10,1.2),(10.2,1.2),(9.8,1),(10,1),(10.2,1),(9.8,0.8),(10,0.8),(10.2,0.8)\}.\]
These are the nine points located in the bottom right corner of the image.
\begin{figure}[h]
\centering
\includegraphics[width=0.7\textwidth]{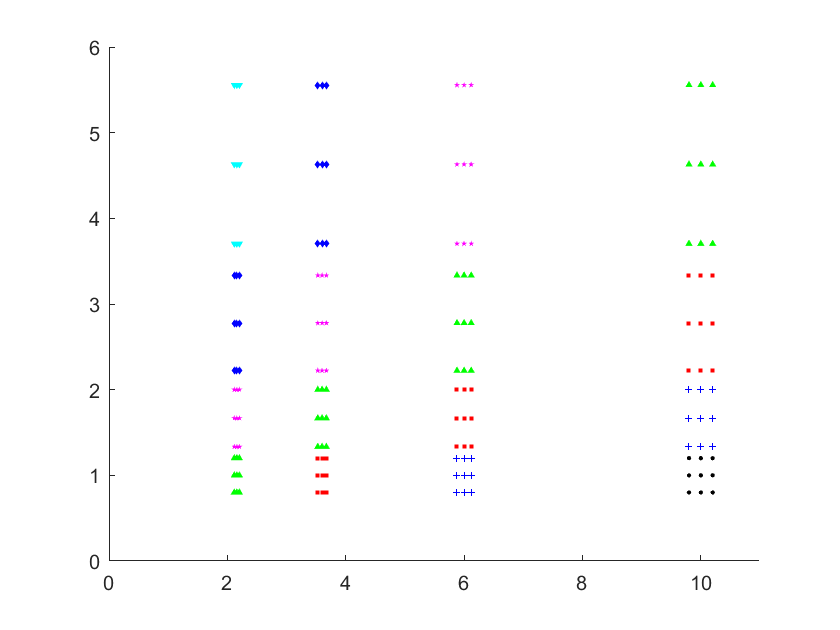}
\caption{The set generated by $S$ after $6$ iterations, where $q=0.6$.}
\label{fig1}
\end{figure}
\end{remark}

The $q$-differential of a function $f$ is defined as:
\[
\mathrm{d}_q f(x, y) = f(qx, qy) - f(x, y).
\]
This can be expressed in terms of $q$-derivatives as:
\[
\mathrm{d}_q f(x, y) = \left( M_q^y D_x^{q} f(x, y) \right)\, \mathrm{d}_q x + \left( D_y^{q} f(x, y) \right)\, \mathrm{d}_q y
\]
In complex coordinates, $z = x + iy$, $\bar{z} = x - iy$, we have:
\[
\mathrm{d}_q z = \mathrm{d}_q x + i \mathrm{d}_q y, \quad \mathrm{d}_q \bar{z} = \mathrm{d}_q x - i \mathrm{d}_q y.
\]
Therefore, the total $q$-differential becomes:
\[
\mathrm{d}_q f = \left( M_q^y D_z f \right)\, \mathrm{d}_q z + \left( M_q^y D_{\bar{z}} f \right)\, \mathrm{d}_q \bar{z},
\]
where the complex $q$-derivatives are defined as:
\[
D_z = \frac{1}{2} \left( D_x^{q} - i M_{1/q}^y D_y^{q} \right) = \frac{1}{2} \left( D_x^{q} - i D_y^{1/q} \right), \quad D_{\bar{z}} = \frac{1}{2} \left( D_x^{q} + i M_{1/q}^y D_y^{q} \right) = \frac{1}{2} \left( D_x^{q} + i D_y^{1/q} \right)
\]
and $D_{\chi}^{p}$ denotes the $p$-derivative with respect to the variable $\chi$.

\begin{remark}
Notice that, in order for a function $f:\Omega\to\mathbb{C}$ to admit $D_{z}$ and $D_{\bar z}$ derivatives we must ask $\Omega$ to be a $(q,q^{-1})$-invariant set.
\end{remark}

\begin{definition}
Let $\Omega\subset\mathbb{C}$ be a $(q,q^{-1})$-invariant set.
A function $ f : \Omega \to \mathbb{C} $ is said to be \textit{$q$-analytic} if
\[
D_{\bar{z}} f = 0.
\]
\end{definition}
Therefore, if $f$ is $q$-analytic, the $q$-differential simplifies to:
\[
\mathrm{d}_q f = \left( M_q^y D_z f \right)\, \mathrm{d}_q z.
\]
In the classical limit $ q \to 1 $, this reduces to the standard Cauchy-Riemann condition:
\[
\frac{\partial f}{\partial \bar{z}} = 0.
\]

\begin{remark}
In the classical holomorphic setting, a holomorphic function is uniquely determined up to a scalar. Conversely, a $q$-analytic function is uniquely determined up to a $(q,q^{-1})$-periodic function $A_{q}$, i.e., a function such that for any $(x,y)\in\Omega$, we have $A_{q}(x,y)=A_{q}(qx,y)=A_{q}(x,q^{-1}y)$.
These equalities imply that $A_{q}$ is constant on any grid $\Omega_{(x,y)}$ contained in $\Omega$.
\end{remark}

\begin{remark}
Analogously to the classical case, a function $ f : \Omega \to \mathbb{C} $ on a $(q,q^{-1})$-invariant set $\Omega$ is said to be \textit{$q$-anti-analytic} if it satisfies
\[
D_z f = 0.
\]
\end{remark}

\subsection{$q$-Analytic Polynomials and Power Series}
As discussed in~\cite{Am}, while the kernel of $D_{\bar{z}}$ contains unconventional functions, there exists a class of polynomials and power series that closely resemble conventional holomorphic functions.

\begin{definition}
Given $z = x + iy \in \mathbb{C}$ and $n \geq 2$, the \textit{$q$-analytic monomials} are defined recursively as follows:
\[
z_q^2 = (x + iy)(x + iqy), \quad z_q^n = z_q^{n-1}(x + iq^{n-1}y).
\]

\end{definition}

\begin{remark}
These monomials satisfy $ D_{\bar{z}} z_q^n = 0 $, and moreover:
\[
D_z z_q^n = [n]_q z_q^{n-1}.
\]
\end{remark}

Consequently, any linear combination of $z_q^n$ is $q$-analytic.
Analogously, one can define the $q$-anti-analytic monomials $\overline{z}_q^n$. (Clearly, $\overline{z}_q^n = \overline{z_q^n}$.) 

Note that, while $q$-analytic functions are not analytic functions (not even continuous functions~\cite{Am,BAZ}), the functions $z_{q}^{n}$ are generalized analytic functions~\cite{Pashaev2014} in the sense of Vekua~\cite{Vekua1962}.

\begin{remark}\label{remnorm}
For any positive integer $n$, it holds that $x^2 + q^{2n}y^2 \leq x^2 + y^2$, which implies:
\[
|z_q^n| \leq |z^n|.
\]
Consequently, if a function $f : \mathbb{D}_r \to \mathbb{C}$ is holomorphic with power series expansion $f(z) = \sum_{k=0}^\infty a_k z^k$, then the $q$-power series
\[
\sum_{k=0}^\infty a_k z_q^k
\]
is well-defined and $q$-analytic in any $(q,q^{-1})$-invariant subset contained in the same disc $\mathbb{D}_r$. In particular, if $f$ is an entire function, then $\sum_{k=0}^\infty a_k z_q^k$ is
well-defined and $q$-analytic in the whole space $\mathbb{C}$.
\end{remark}

\section{A Family of $q$-Deformed Hermite Polynomials}\label{secHerm}
In this section, we recall a family of $q$-deformed Hermite polynomials introduced in~\cite{Exton1980}, and later studied in a different context in~\cite{CoSo2010}. These polynomials will play a key role in the subsequent definition of the $q$-Bargmann transform.

\subsection{The $q$-Analog of the Gamma Function}

We begin by introducing a $q$-analogue of the Gamma function. For $q < 1$, it is defined by:
\[
\Gamma_q(t) = \frac{\prod_{k=1}^{\infty}(1 - q^k)}{\prod_{k=0}^{\infty}(1 - q^{t+k})}(1 - q)^{1 - t},
\]
and it satisfies the functional equation:
\[
\Gamma_q(t + 1) = [t]_q \Gamma_q(t),
\]
see~\cite{Gasper2009}.
Moreover, $\Gamma_q$ admits the following $q$-integral representation:
\[
\Gamma_q(z) = \Jint_0^{\frac{1}{1 - q}}  \, t^{z - 1} e_q(-q t)\,\mathrm{d}_q t,
\]
see~\cite{Exton1983, Floreanini1991}.
For $q < 1$, and setting $ \lambda = \sqrt{\frac{1}{1 - q^2}} $, the following identity holds:
\[
\Jint_{-\lambda}^{\lambda} t^{\nu - 1} e_{q^{2}}(-t^2) \,\mathrm{d}_q t
= \frac{2}{q + 1} q^{\nu} \Gamma_{q^{2}}\left( \frac{\nu}{2} \right).
\]

\subsection{The $q$-Hermite Equation}
Following~\cite{CoSo2010}, we consider the $q$-Hermite equation originally studied by Exton~\cite{Exton1980}.

\begin{definition}
The \textit{$q$-Hermite polynomial} $H_k^q$ is defined as a polynomial of the form
\[
H_k^q(t) = \sum_{j=0}^{\lfloor k/2 \rfloor} a_k^j t^{k - 2j},
\]
with initial coefficient $ a_k^0 = (q+1)^k $. These polynomials are eigenfunctions of the $q$-Hermite differential equation:
\[
\left( \left(D_t^q\right)^2 - (q+1)t D_t^q \right)f(t) = -(q+1)\lambda f(qt).
\]
\end{definition}
The corresponding eigenvalues are given by:
\[
\lambda_k = [k]_q q^{-k}.
\]
An explicit expression for $ H_k^q $ is:
\[
H_k^q(t) = \sum_{j=0}^{\lfloor k/2 \rfloor} (q+1)^{k-j} \frac{[k]_q!}{[k - 2j]_q! [-2j]_q [-2j+2]_q \cdots [-2]_q} t^{k - 2j}.
\]
As usual, in the classical limit $ q \to 1 $, these reduce to the standard Hermite polynomials:
\[
H_k(t) = \sum_{j=0}^{\lfloor k/2 \rfloor} (-1)^j 2^k \frac{k!}{(k - 2j)! j!} t^{k - 2j}.
\]

\subsection{Recurrence Relation, Annihilation and Creation Operators}
We now present the recurrence relation and annihilation operator satisfied by the $q$-Hermite polynomials. These can be derived by analyzing the structure of their expansion coefficients.

\begin{theorem}[Theorem 6 in~\cite{CoSo2010}]
The $q$-Hermite polynomials satisfy the recurrence relation:
\[
H_{k+1}^q = (q+1)tH_k^q - (q+1)[k]_q q^k H_{k-1}^q \quad \text{for } k > 0.
\]

Moreover, the $q$-derivative acts as an annihilation operator:
\[
D_t^q H_k^q(t) = (q+1)[k]_q H_{k-1}^q(t).
\]
\end{theorem}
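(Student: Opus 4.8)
The plan is to prove both identities by induction on $k$, working directly with the explicit coefficient formula for $H_k^q$ together with the defining data $a_k^0 = (q+1)^k$ and the recursion $z_q$—sorry, the recursion defining the $H_k^q$ as eigenfunctions. Concretely, I would first establish the recurrence relation, and then deduce the annihilation identity from it by a short computation, since the two are tightly linked.

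\textbf{Step 1: the recurrence.} Write $H_k^q(t) = \sum_{j=0}^{\lfloor k/2\rfloor} a_k^j t^{k-2j}$ with $a_k^0 = (q+1)^k$. Substituting this ansatz into the $q$-Hermite differential equation $\left(\left(D_t^q\right)^2 - (q+1)t D_t^q\right)f(t) = -(q+1)\lambda_k f(qt)$ and using $D_t^q t^m = [m]_q t^{m-1}$, hence $\left(D_t^q\right)^2 t^m = [m]_q[m-1]_q t^{m-2}$, and $f(qt) = \sum_j a_k^j q^{k-2j} t^{k-2j}$, I would match coefficients of $t^{k-2j}$ on both sides. This yields a two-term recursion expressing $a_k^{j}$ in terms of $a_k^{j-1}$, which (together with $a_k^0 = (q+1)^k$ and $\lambda_k = [k]_q q^{-k}$) pins down all coefficients and reproduces the explicit formula quoted in the text; I would take that explicit formula as the definition to compute with. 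Then, to get the recurrence $H_{k+1}^q = (q+1)t H_k^q - (q+1)[k]_q q^k H_{k-1}^q$, I compare the coefficient of $t^{k+1-2j}$ on the left, namely $a_{k+1}^j$, with the coefficient on the right, namely $(q+1)a_k^j - (q+1)[k]_q q^k a_{k-1}^{j-1}$, and verify this is an identity among the explicit closed forms. This is a routine but slightly delicate manipulation of $q$-factorials of the shape $[k-2j]_q!$ and the products $[-2j]_q[-2j+2]_q\cdots[-2]_q$; the $-q^n$ identity $[n]_q - [n+1]_q = -q^n$ recorded in Section~\ref{preliminaries} and the relation $\Gamma_{q}(t+1)=[t]_q\Gamma_q(t)$ may streamline the bookkeeping for the "negative" $q$-numbers appearing there.

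\textbf{Step 2: the annihilation operator.} Once the recurrence is in hand, apply $D_t^q$ to $H_{k+1}^q = (q+1)t H_k^q - (q+1)[k]_q q^k H_{k-1}^q$ using the first Leibniz rule $D_t^q(t\,g)(t) = g(t) + (qt)\,D_t^q g(t) = g(t) + q t\, D_t^q g(t)$; alternatively, I can simply verify $D_t^q H_k^q = (q+1)[k]_q H_{k-1}^q$ directly from the explicit coefficient formula, since $D_t^q$ lowers $t^{k-2j}$ to $[k-2j]_q t^{k-2j-1}$ and the factor $[k]_q/[k-2j]_q!$ turns into $[k]_q[k-2j]_q/[k-2j]_q! = [k]_q/[k-2j-1]_q!$, matching, after extracting $(q+1)[k]_q$, the coefficient pattern of $H_{k-1}^q$ with parameter shift $j \mapsto j$ (with the index $\lfloor k/2\rfloor$ versus $\lfloor (k-1)/2\rfloor$ edge case handled by noting the would-be extra term has a vanishing coefficient when $k$ is even). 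Consistency of the two derivations (via the recurrence and via the explicit formula) provides a built-in check.

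\textbf{Main obstacle.} I expect the genuinely fiddly part to be Step 1: correctly handling the products $[-2j]_q[-2j+2]_q\cdots[-2]_q$ of $q$-numbers at non-positive integers and showing that the closed-form coefficients satisfy the three-term recurrence exactly, including the precise constant $(q+1)[k]_q q^k$ in front of $H_{k-1}^q$. The cleanest route is probably to \emph{not} juggle the messy explicit coefficients at all, but rather to argue structurally: show that both sides of the claimed recurrence are solutions (in an appropriate sense) determined by finitely many coefficients, and that they agree on enough of them — for instance, comparing leading coefficients ($a_{k+1}^0 = (q+1)^{k+1} = (q+1)\cdot(q+1)^k$ is immediate) and the subleading coefficient, then invoking that the eigenvalue equation forces the rest. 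If that structural shortcut is unavailable, the fallback is the direct coefficient comparison, which is elementary but requires careful tracking of $q$-factorial cancellations; either way, no conceptual difficulty is anticipated beyond this algebraic bookkeeping.
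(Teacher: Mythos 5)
The paper itself offers no proof of this statement: it is quoted as Theorem~6 of~\cite{CoSo2010}, accompanied only by the remark that the identities ``can be derived by analyzing the structure of their expansion coefficients.'' Your strategy --- direct verification from the explicit coefficient formula --- is therefore the natural reading of that remark, and your Step~2 does go through exactly as you describe: writing $a_k^j=(q+1)^{k-j}\,[k]_q!\big/\big([k-2j]_q!\,[-2j]_q[-2j+2]_q\cdots[-2]_q\big)$, the coefficient of $t^{k-1-2j}$ in $D_t^qH_k^q$ is $a_k^j[k-2j]_q$, and the required identity $a_k^j[k-2j]_q=(q+1)[k]_q\,a_{k-1}^j$ reduces to $[k]_q!/[k-2j-1]_q!=[k]_q\cdot[k-1]_q!/[k-1-2j]_q!$, which is immediate; the edge term for $k$ even is killed by $[0]_q=0$, as you note.

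The recurrence is where your plan breaks down --- and not because of the $q$-factorial bookkeeping you anticipate, but because the identity you would have to verify is false as printed. Dividing the claimed coefficient identity $a_{k+1}^j=(q+1)a_k^j-(q+1)[k]_q q^k a_{k-1}^{j-1}$ by the common factor $(q+1)^{k+1-j}[k]_q!\big/\big([k+1-2j]_q!\,\prod_{i=1}^{j}[-2i]_q\big)$ reduces it to
\[
[k+1]_q \;=\; [k+1-2j]_q \;-\; q^{k}\,[-2j]_q ,
\]
whereas the identity that actually holds (expand each bracket as $(1-q^{\alpha})/(1-q)$) is $[k+1]_q=[k+1-2j]_q-q^{k+1}[-2j]_q$. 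Concretely: the explicit formula, the eigenvalue equation with $\lambda_k=[k]_qq^{-k}$, the annihilation identity, and the creation identity $H_k^q=\big((q+1)t-q^kD_t^q\big)H_{k-1}^q$ all yield $H_2^q=(q+1)^2t^2-(q+1)q^2$, while the displayed recurrence at $k=1$ yields $(q+1)^2t^2-(q+1)q$. The recurrence consistent with everything else in the section is $H_{k+1}^q=(q+1)tH_k^q-(q+1)[k]_q\,q^{k+1}H_{k-1}^q$, which follows in one line by substituting the annihilation identity into the creation identity --- incidentally the cleanest proof of the recurrence, and a cleaner route than your ``structural'' fallback. So your method is sound and your Step~2 is complete, but carried out honestly Step~1 proves a corrected statement rather than the one displayed; you should flag the exponent discrepancy rather than try to force the coefficient comparison to close.
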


In the classical case, the creation operator can be derived from either the recurrence relation or directly from the Hermite differential equation. However, in the $q$-deformed setting, these two approaches yield distinct creation operators.

For the $q$-Hermite polynomials $ H_k^q $, the following identities hold for $ k > 0 $:
\[
H_k^q(t) = \left( (q+1)t - q^k D_t^q \right) H_{k-1}^q(t), \quad
H_k^q(qt) = q^k \left( (q+1)t - D_t^q \right) H_{k-1}^q(t).
\]

\subsection{Exponential Function and Orthogonality}
Recall the $q$-exponential function:
\[
e_{q^2}(u) = \sum_{j=0}^\infty \frac{q^{j(j-1)} u^j}{[j]_q!}.
\]

This function satisfies the identity:
\[
D_t^q \left[ e_{ q^2}(-t^2) \right] = -(q+1)t\, e_{q^2}(-t^2).
\]

Using this identity and the Leibniz rule, one obtains:
$$
H_k^q(qt)\, e_{q^2}(-q^2 t^2) = -q^k D_t^q \left[ H_{k-1}^q(t)\, e_{q^2}(-t^2) \right].
$$

Theorem 8 in~\cite{CoSo2010} establishes the orthogonality of the $q$-Hermite polynomials. Recall from Equation~\eqref{convexp} that the convergence radius of $ e_{q^2}(-t^2) $ is $ \lambda = \frac{1}{\sqrt{1 - q^2}} $.

If $ 0 < q < 1 $, the $q$-Hermite polynomials are orthogonal with respect to the inner product
\begin{equation}\label{Iprod}
\langle f, g \rangle = \Jint_{-\lambda}^{\lambda} f(t)\, g(t)\, e_{q^2}(-t^2)\, \mathrm{d}_q t,
\end{equation}
where $ \lambda^2 = \frac{1}{1 - q^2} $. In particular, we have:
\begin{equation}\label{ForLambda}
\Jint_{-\lambda}^{\lambda} H_k^q(t)\, H_l^q(t)\, e_{q^2}(-t^2)\, \mathrm{d}_q t = \delta_{kl}\, 2(q+1)^{k-1}\, q^{\frac{1}{2}(k+1)(k+2)}\, [k]_q!\, \Gamma_{q^2}\left( \frac{1}{2} \right) = \Lambda\, \delta_{kl},
\end{equation}
with
\begin{equation}\label{biglambda}
\Lambda = 2(q+1)^{k-1}\, q^{\frac{1}{2}(k+1)(k+2)}\, [k]_q!\, \Gamma_{q^2}\left( \frac{1}{2} \right).
\end{equation}

\begin{remark}
This inner product is positive definite only when the functions are restricted to the discrete set of points $ \{ \pm \lambda q^j \mid j \in \mathbb{N} \} $.
\end{remark}

With the inner product in Formula~\eqref{Iprod}, we define a weighted  $L_q^2$-space:
\begin{definition}
Let $\lambda=\frac{1}{\sqrt{1-q^2}}$. The space
$L^2_q(\mathbb{R},e_{q^2}(-t^2),\lambda)$ is the closure of
\[
\operatorname{span}\Big\{\tfrac{1}{\sqrt{\Lambda}}\,H_k^q(t):k\in\mathbb{N}\Big\}
\]
with respect to the inner product
\[
\langle f,g\rangle=\Jint_{-\lambda}^{\lambda} f(t)\,g(t)\,e_{q^2}(-t^2)\,\mathrm{d}_q t.
\]
\end{definition}

\begin{remark} This definition resembles the situation in $L^2(\mathbb{R}),$ where the Hermite functions are a basis.
\end{remark}

\section{A $q$-Fock Space of $q$-analytic Functions}\label{q-fockspace}

In this section, we will introduce and study a $q$-deformed version of the classical Fock space.
Inspired by the work~\cite{ADDM}, we will define all the necessary concepts in terms of the coefficients of the functions involved in the space itself. Later, we will explore a method to reconcile the discrete and continuous approaches.
In the classical case~\cite{ADDM}, the reproducing kernel of the Fock space typically follows from
$$\sum_{n=0}^{\infty} u_n(\xi)u_n(\bar{z}) = \exp(\xi\bar{z})
$$
with $u_n(z) = \frac{z^n}{\sqrt{n!}}$.

In \cite{Pashaev2014} the authors describe a $q$-wave function built from the functions
$$
\psi_n(z_{q}) = \frac{z_{q}^{n}}{\sqrt{[n]_q!}}=\frac{(x+iy)_q^n}{\sqrt{[n]_q!}}.
$$
Let us consider then $\sum_{n=0}^{\infty} \psi_n(\xi_{q})\psi_n(\bar{z}_{q})$. We obtain
\begin{align*}
\sum_{n=0}^{\infty} \psi_n(\xi_{q})\psi_n(\bar{z}_{q}) &= \sum_{n=0}^{\infty} \frac{(\xi_1+i\xi_2)_q^n}{\sqrt{[n]_q!}}\cdot \frac{(z_1-iz_2)_q^n}{\sqrt{[n]_q!}}\\
&=\sum_{n=0}^{\infty} \frac{(\xi_1+i\xi_2)_q^n(z_1-iz_2)_q^n}{[n]_q!}.
\end{align*}
It is not possible to combine this into a $q$-deformed exponential. However, for $q \rightarrow 1$, we recover $\exp(\xi\bar{z})$. We must also consider the convergence of this series. While defining the wave functions $\psi_n$ from the $q$-coherent states, the authors of~\cite{Pashaev2014} argue that we need to restrict to the disc $\vert z \vert^2 \leq \frac{1}{1-q}$ for $\vert q \vert < 1$. For $q > 1$, the $q$-exponential function (Jackson’s exponential) converges without restriction.
We can still determine whether this is a reproducing kernel of our $q$-Fock space. 

Set
\begin{equation*}
K_q(z_{q},w_{q}) = \sum_{n=0}^{\infty} \frac{z_q^n\bar{w}_q^n}{[n]_q!}.
\end{equation*}

We are now able to define our $q$-Fock space.

\begin{definition}
The \textit{$q$-Fock space} is defined as
\begin{equation*}
\mathcal{F}_q(\mathbb{C}) = \left\{ f(z_{q}) = \sum_{n=0}^{\infty} z_q^n a_n\, :\, (a_n)_{n\in\mathbb{N}} \subset \mathbb{C},\, \sum_{n=0}^{\infty} \vert a_n \vert^2 [n]_q! < \infty\right\},
\end{equation*}
where $\vert q \vert < 1$.
\end{definition}

This $q$-Fock space can be endowed with the following inner product and the associated norm:
\begin{equation}\label{eq:fischer}
\langle f,g\rangle_{\mathcal{F}_q(\mathbb{C})} = \sum_{n=0}^{\infty} [n]_q! a_n\bar{b_n},\qquad
\lVert f \rVert_{\mathcal{F}_q(\mathbb{C})}^2 = \sum_{n=0}^{\infty} [n]_q! \vert a_n\vert^2.
\end{equation}
Now let us go back to our function $K_q$.

\begin{remark}
The inner product \eqref{eq:fischer} on the $q$–Fock space can be realized as a ($q$-)Fischer inner product:
\begin{equation}\label{eq:qFischer}
\langle f,g\rangle_{\text{F}}
= f(D_z)\,\overline{g}(0)
= \sum_{n=0}^\infty \frac{1}{[n]_q!}\, D_z^{\,n}f(0)\,\overline{D_z^{\,n}g(0)}
= \sum_{n=0}^\infty [n]_q!\, a_n\,\overline{b_n},
\end{equation}
where $f(z)=\sum_{n\ge0} a_n z^n$, $g(z)=\sum_{n\ge0} b_n z^n$. In particular,
$f(D_z)\,\overline{f}(0)=\sum_{n\ge0}[n]_q!\,|a_n|^2>0$ for $f\neq0$.
\end{remark}

\begin{theorem}
The function $K_q(z_{q},w_{q}) = \sum_{n=0}^{\infty} \frac{z_q^n\bar{w}_q^n}{[n]_q!}$ is the reproducing kernel of the $q$-Fock space $\mathcal{F}_q(\mathbb{C})$.
\end{theorem}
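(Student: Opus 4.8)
The plan is to reduce everything to the observation that, by the very definition of the inner product \eqref{eq:fischer}, the normalized monomials $\psi_n(z_q):=z_q^n/\sqrt{[n]_q!}$ form an orthonormal system whose finite linear combinations are dense in $\mathcal{F}_q(\mathbb{C})$; hence $\{\psi_n\}_{n\ge0}$ is an orthonormal basis. First I would rewrite the candidate kernel, for fixed $w$, as
\[
K_q(\cdot,w_q)=\sum_{n=0}^{\infty}\frac{\bar w_q^{\,n}}{[n]_q!}\,z_q^n=\sum_{n=0}^{\infty}\overline{\psi_n(w_q)}\,\psi_n(z_q),
\]
so that its coordinates in the basis $\{\psi_n\}$ are precisely $\overline{\psi_n(w_q)}$.

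Second, I would check that $K_q(\cdot,w_q)$ genuinely lies in $\mathcal{F}_q(\mathbb{C})$: its squared norm equals $\sum_n|\psi_n(w_q)|^2=\sum_n|w_q^n|^2/[n]_q!=K_q(w_q,w_q)$, and by Remark~\ref{remnorm} one has $|w_q^n|\le|w|^n$, whence this is dominated by $E_q(|w|^2)<\infty$ on the disc $|w|^2<\frac{1}{1-q}$ on which the whole construction is placed (and in practice the bound is much better, e.g.\ for purely imaginary $w$ it converges for every $w$). Third, for an arbitrary $f=\sum_n a_n z_q^n=\sum_n a_n\sqrt{[n]_q!}\,\psi_n\in\mathcal{F}_q(\mathbb{C})$, I would compute term by term
\[
\langle f,K_q(\cdot,w_q)\rangle_{\mathcal{F}_q(\mathbb{C})}=\sum_{n=0}^{\infty}a_n\sqrt{[n]_q!}\;\overline{\overline{\psi_n(w_q)}}=\sum_{n=0}^{\infty}a_n\sqrt{[n]_q!}\,\frac{w_q^n}{\sqrt{[n]_q!}}=\sum_{n=0}^{\infty}a_n w_q^n=f(w_q),
\]
the rearrangement being legitimate since both coefficient sequences lie in $\ell^2$ with weight $[n]_q!$. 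Finally I would invoke the standard uniqueness of a reproducing kernel: if $K'$ were another one, then $\langle K_q(\cdot,w)-K'(\cdot,w),f\rangle=0$ for all $f$, in particular for $f=K_q(\cdot,w)-K'(\cdot,w)$, forcing $K'=K_q$; so $K_q$ is \emph{the} reproducing kernel of $\mathcal{F}_q(\mathbb{C})$.

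The only genuinely delicate point, and the step I expect to require the most care, is the convergence bookkeeping in the regime $|q|<1$: one must make sure that the evaluation functional $f\mapsto f(w_q)$ is well defined and bounded on $\mathcal{F}_q(\mathbb{C})$, which is exactly the finiteness of $K_q(w_q,w_q)$ established above together with the Cauchy--Schwarz estimate $|f(w_q)|\le\|f\|_{\mathcal{F}_q(\mathbb{C})}\sqrt{K_q(w_q,w_q)}$. This is what pins down the natural domain of definition (the disc $|w|^2<(1-q)^{-1}$, possibly larger depending on $\arg w$, as already noted in the discussion preceding the statement). Everything else is a formal rearrangement identical to the classical Fock-space computation, with $n!$ replaced by $[n]_q!$ and $z^n$ by $z_q^n$.
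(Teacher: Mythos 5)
Your proposal is correct and follows essentially the same route as the paper's proof: show that the kernel section $K_q(\cdot,w_q)$ lies in $\mathcal{F}_q(\mathbb{C})$ using $|w_q^n|\le |w|^n$ from Remark~\ref{remnorm}, bound the evaluation functional by Cauchy--Schwarz, and verify the reproducing identity coefficientwise (your closing uniqueness remark is an extra, standard observation the paper omits). If anything, your convergence bookkeeping is the more careful one: the squared norm of $K_q(\cdot,w_q)$ is $\sum_n |w_q^n|^2/[n]_q!\le E_q(|w|^2)$, which requires $|w|^2<(1-q)^{-1}$, whereas the paper's displayed estimate drops a square and states the condition as $|w|<(1-q)^{-1}$.
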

\begin{proof}
Let $\alpha(w_{q}) = \frac{\bar{w}_q^n}{[n]_q!}$. Then we have:
$$K_q(z_{q},w_{q}) = \sum_{n=0}^{\infty} z_q^n\alpha(w_{q}).$$
Consider the norm:
$$\lVert K_q \rVert_{\mathcal{F}_q(\mathbb{C})} = \sum_{n=0}^{\infty} [n]_q! \vert \alpha(w_{q}) \vert^2 = \sum_{n=0}^{\infty} \vert w_q^n\vert \frac{1}{[n]_q!} \leq \sum_{n=0}^\infty \vert w^n\vert\frac{1}{[n]_q!} = \sum_{n=0}^\infty \vert w \vert^n\frac{1}{[n]_q!} <\infty$$
if $q\in (0,1)$ and $\vert w \vert <\frac{1}{1-q}$. 
The inequality follows from Remark~\ref{remnorm}.
Therefore, $K_q(z_{q},w_{q}) \in \mathcal{F}_q(\mathbb{C})$.
Next, we need to show that $e_w(f) = f(w_{q})$ is a continuous linear functional and $\vert e_w(f)\vert \leq \lVert f \rVert_{\mathcal{F}_q(\mathbb{C})} \lVert K_q \rVert_{\mathcal{F}_q(\mathbb{C})} \, \forall f\in \mathcal{F}_q(\mathbb{C})$. Consider an arbitrary sequence of coefficients $(a_n)_{n\in\mathbb{N}} \subset \mathbb{C}$.
\begin{align*}
\vert e_w(f) \vert = \vert f(w_{q}) \vert &\leq \sum_{n=0}^{\infty} \vert w_q^n\vert \cdot \vert a_n \vert= \sum_{n=0}^{\infty} \sqrt{[n]_q!}\frac{1}{\sqrt{[n]_q!}}\vert w_q^n\vert\cdot \vert a_n\vert\\
&\leq \left( \sum_{n=0}^{\infty} \frac{1}{[n]_q!}\vert a_n \vert^2\right)^{1/2}\left(\sum_{n=0}^{\infty} [n]_q! (\vert w_q^n \vert)^2\right)^{1/2}\leq \lVert f \rVert_{\mathcal{F}_q(\mathbb{C})} \lVert K_q \rVert_{\mathcal{F}_q(\mathbb{C})},
\end{align*}
where the last inequality follows, again, thanks to Remark~\ref{remnorm}.

Finally we just need to verify that $K_q(z_{q},w_{q}) = \sum_{n=0}^{\infty} \frac{z_q^n\bar{w}_q^n}{[n]_q!}$ is a reproducing kernel of $\mathcal{F}_q(\mathbb{C})$, i.e. $\langle f, K_q \rangle_{\mathcal{F}_q(\mathbb{C})} = f(w_{q})$.\\
Take $f(z_{q}) = \sum_{n=0}^{\infty} z_q^n a_n \in \mathcal{F}_q(\mathbb{C})$ with $(a_n)_{n\in\mathbb{N}} \subset\mathbb{C}$ and set $b_n := \frac{\bar{w}_q^n}{[n]_q!}$.
$$
\langle f,K_q \rangle_{\mathcal{F}_q(\mathbb{C})} = \sum_{n=0}^{\infty} [n]_q! a_n\bar{b_n}= \sum_{n=0}^{\infty} [n]_q! a_n \frac{w_q^n}{[n]_q!}= \sum_{n=0}^{\infty} a_n w_q^n = f(w_{q}).
$$
\end{proof}

Let us consider $q$-binomials $z_q^n$. Their inner product is then:
$$
\langle z_q^n,z_q^m\rangle_{\mathcal{F}_q(\mathbb{C})} = [n]_q!\delta_{n,m}
$$
as we only have $a_k = b_k = 1$ if $m=n$.

\begin{remark}
The kernel $ K_q(z_{q}, w_{q}) $ is positive definite and Hermitian symmetric. By the Moore--Aronszajn theorem~\cite{Moore-Aronszajn}, it defines a unique reproducing kernel Hilbert space. However, this space does not coincide exactly with our $q$-deformed Fock space $ \mathcal{F}_q(\mathbb{C}) $.

Indeed, while the $q$-complex binomials $ z_q^n $ form a linearly independent set that spans $ \mathcal{F}_q(\mathbb{C}) $, they are not orthogonal with respect to the standard inner product. This lack of orthogonality arises because the monomials $ z_q^n $ generally depend on both $ z $ and $ \bar{z} $, as will be further discussed in Section~\ref{secL2}.

For the same reason, the inner product in $ \mathcal{F}_q(\mathbb{C}) $ cannot be defined using an integral formulation. Explicit computations show that $ \langle z_q^n, z_q^m \rangle \neq 0 $ even when $ n \neq m $, whenever the product $ z_q^n \cdot \bar{z}_q^m $ retains mixed dependence on $ z $ and $ \bar{z} $.

Nevertheless, we will attempt to construct an explicit realization of the $q$-Fock space within a proper Hilbert space framework in Section~\ref{secL2}.
\end{remark}

\subsection{Position and Momentum Operators}

In this subsection, we introduce the $q$-deformed analogues of the position and momentum operators and
we compute their commutator, which deforms the identity and tends to it as 
$q\to1$.

We define the creation operator $ a $ and the annihilation operator $ a^{\dagger} $  based on the $q$-position operator $X_q = z \circ M_q^y$  and the $q$-momentum operator $P_q = - iD_z $  by:
\[
a  = \frac{\sqrt{2}}{2} (X_q+iP_q)  \qquad a^{\dagger} = \frac{\sqrt{2}}{2} (X_q - iP_q) ,
\]
where $ z = x + i y $, and we recall that $ M_q^y $ denotes the dilation operator acting on the $ y $-variable, i.e., $ M_q^y(f(x,y)) = f(x, qy) $.
Note that this action satisfies:
\[
(z \circ M_q^y)(z_q^n) = z_q^{n+1}.
\]

\begin{remark}
Since
$
a=\tfrac{\sqrt2}{2}(X_q+iP_q)$, and $a^\dagger=\tfrac{\sqrt2}{2}(X_q-iP_q)
$,
we have that
\[
\begin{aligned}
[a,a^\dagger]
&=\tfrac12\,[X_q+iP_q,\,X_q-iP_q]\\
&=\tfrac12\big([X_q,-iP_q]+[iP_q,X_q]\big)
= -\,i[X_q,P_q]\\
&= -\,i[X_q,-\,iD_z]
= [D_z,X_q].
\end{aligned}
\]
Hence, $[a,a^\dagger]=[D_z,X_q]$. In particular,
$-i[X_q,P_q]=[D_z,X_q]$.
\end{remark}

In the classical case, the commutator $ [a, a^{\dagger}] $ equals the identity operator. In our $q$-deformed setting, the commutator deforms accordingly and tends to the identity as $ q \to 1 $.

\begin{lemma}
The commutator of the creation and annihilation operators satisfy
\[
[a,a^\dagger](z_q^n)= q^n\,z_q^n \quad\text{for all }n\ge0.
\]
\end{lemma}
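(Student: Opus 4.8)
The plan is to compute the action of $[a,a^\dagger]=[D_z,X_q]$ directly on the basis $\{z_q^n\}$, using the two facts we already have: the ``holomorphy'' relation $D_z z_q^n = [n]_q\, z_q^{n-1}$ and the raising relation $(z\circ M_q^y)(z_q^n)=z_q^{n+1}$. So first I would write $X_q z_q^n = z_q^{n+1}$, whence $D_z X_q z_q^n = D_z z_q^{n+1} = [n+1]_q\, z_q^n$. Next I would compute the other order: $X_q D_z z_q^n = X_q\bigl([n]_q z_q^{n-1}\bigr) = [n]_q\, (z\circ M_q^y) z_q^{n-1} = [n]_q\, z_q^n$ (for $n\ge 1$; for $n=0$ the term $D_z z_q^0=0$ so this contributes nothing, consistent with $[0]_q=0$). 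Subtracting gives
\[
[D_z,X_q](z_q^n) = \bigl([n+1]_q - [n]_q\bigr) z_q^n = q^n\, z_q^n,
\]
where the last equality is the identity $[n]_q-[n+1]_q=-q^n$ recorded in Section~\ref{preliminaries}. Combined with the remark that $[a,a^\dagger]=[D_z,X_q]$, this is exactly the claim.

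The only genuine subtlety is bookkeeping at $n=0$: one must check that the formula $X_q D_z z_q^n=[n]_q z_q^n$ is still valid when $n=0$, i.e. that both sides vanish. Since $z_q^0 = 1$ is constant, $D_z z_q^0 = 0$, so $X_q D_z z_q^0 = 0 = [0]_q z_q^0$, and indeed $[D_z,X_q](z_q^0) = D_z X_q(1) = D_z(z) = [1]_q z_q^0 = 1 \cdot z_q^0 = q^0 z_q^0$, so the case $n=0$ is fine. A second point worth a line is that $X_q$ genuinely maps $z_q^{n-1}$ to $z_q^n$ — this is the stated identity $(z\circ M_q^y)(z_q^n)=z_q^{n+1}$ — rather than to $z\cdot z_q^{n-1}$, the distinction being precisely the dilation $M_q^y$ that adjusts the phase $q^{n-1}y$ appearing in the last factor of $z_q^{n-1}$; this is what makes the recursion close on the $q$-monomials.

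I would present the argument as a short display: apply both $D_z X_q$ and $X_q D_z$ to $z_q^n$ using the two recursions, subtract, and invoke $[n+1]_q-[n]_q=q^n$. No convergence or closure issues arise since everything is tested on a single basis vector, and by linearity this determines $[a,a^\dagger]$ on a dense subspace of $\mathcal{F}_q(\mathbb{C})$. There is no real obstacle here; the content of the lemma is entirely in the two structural identities for $z_q^n$ established earlier, and the $q$-deformation of the canonical commutation relation emerges from the elementary $q$-number identity.
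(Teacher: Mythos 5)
Your proposal is correct and follows exactly the paper's own argument: apply $D_zX_q$ and $X_qD_z$ to $z_q^n$ using the raising relation $(z\circ M_q^y)(z_q^n)=z_q^{n+1}$ and the derivative rule $D_z z_q^n=[n]_q z_q^{n-1}$, then invoke $[n+1]_q-[n]_q=q^n$. Your explicit check of the $n=0$ case is a small welcome addition that the paper leaves implicit.
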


\begin{proof}
Using $[a,a^\dagger]=[D_z,X_q]$ and the rules above,
\[
\begin{aligned}
[D_z,X_q](z_q^n)
&= D_z\big((z\circ M_q^y)(z_q^n)\big) - (z\circ M_q^y)\big(D_z z_q^n\big) \\
&= D_z(z_q^{n+1}) - (z\circ M_q^y)([n]_q z_q^{n-1})
= [n+1]_q z_q^{n} - [n]_q z_q^{n}
= q^n z_q^{n},
\end{aligned}
\]
since $[n+1]_q-[n]_q=q^n$. 
\end{proof}

\begin{corollary}
For any positive integer $n$ we have that
\[
-i[X_q,P_q](z_q^n)=q^n\,z_q^n.
\]
\end{corollary}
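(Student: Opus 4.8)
The plan is to derive this directly from the preceding Lemma together with the operator identity recorded in the Remark just above it. Recall that $P_q=-iD_z$, so $-i[X_q,P_q]=-i\bigl(X_q(-iD_z)-(-iD_z)X_q\bigr)=-\bigl(X_qD_z-D_zX_q\bigr)=[D_z,X_q]$; this is exactly the chain of equalities in the Remark, where it was also identified with $[a,a^\dagger]$. Hence $-i[X_q,P_q]=[a,a^\dagger]$ as operators (at least on the span of the $z_q^n$, which is where both sides are defined).

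Now apply the Lemma, which asserts $[a,a^\dagger](z_q^n)=q^n z_q^n$ for all $n\ge0$. Combining the two facts yields $-i[X_q,P_q](z_q^n)=[a,a^\dagger](z_q^n)=q^n z_q^n$, which is the claim. If one prefers a self-contained computation not routing through $a,a^\dagger$, one can instead expand directly: using $(z\circ M_q^y)(z_q^n)=z_q^{n+1}$ and $D_z z_q^n=[n]_q z_q^{n-1}$, one gets $[D_z,X_q](z_q^n)=D_z(z_q^{n+1})-(z\circ M_q^y)\bigl([n]_q z_q^{n-1}\bigr)=[n+1]_q z_q^n-[n]_q z_q^n=q^n z_q^n$, the last step using the identity $[n+1]_q-[n]_q=q^n$ from Section~\ref{preliminaries}.

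There is essentially no obstacle here: the statement is an immediate corollary of the Lemma, and the only point requiring (minimal) care is the bookkeeping of the factors of $i$ when passing between $X_q,P_q$ and $D_z$, which is already handled in the Remark.
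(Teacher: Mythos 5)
Your proof is correct and matches the paper's intent exactly: the corollary is stated without proof precisely because it follows immediately from the Remark's identity $-i[X_q,P_q]=[D_z,X_q]=[a,a^\dagger]$ together with the Lemma, and your direct computation is the same one the paper uses to prove the Lemma itself. The bookkeeping of the factors of $i$ is handled correctly.
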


\begin{remark}
Recall the Fischer inner product from Formula~\eqref{eq:qFischer}; then, for arbitrary polynomials
\[
R(z)=\sum_{m\ge0} r_m z_q^m,\qquad Q(z)=\sum_{n\ge0} q_n z_q^n,
\]
we have
\[
\begin{aligned}
\langle D_z R,\,Q\rangle_{\mathrm F}
&=\Big\langle \sum_{m\ge1} r_m [m]_q z_q^{m-1},\,\sum_{n\ge0} q_n z_q^n\Big\rangle_{\mathrm F}
=\sum_{m\ge1} r_m [m]_q \,\overline{q_{m-1}}\,\langle z_q^{m-1},z_q^{m-1}\rangle_{\mathrm F}\\
&=\sum_{m\ge1} r_m [m]_q [m-1]_q!\,\overline{q_{m-1}}
=\sum_{m\ge1} r_m [m]_q!\,\overline{q_{m-1}}.
\end{aligned}
\]
On the other hand,
\[
\begin{aligned}
\langle R,\,X_q Q\rangle_{\mathrm F}
&=\Big\langle \sum_{m\ge0} r_m z_q^m,\,\sum_{n\ge0} q_n z_q^{n+1}\Big\rangle_{\mathrm F}
=\sum_{n\ge0} \overline{q_n}\,\langle z_q^{n+1},z_q^{n+1}\rangle_{\mathrm F}\,r_{n+1}\\
&=\sum_{n\ge0} r_{n+1}[n+1]_q!\,\overline{q_n}
=\sum_{m\ge1} r_m [m]_q!\,\overline{q_{m-1}},
\end{aligned}
\]
which equals $\langle D_z R,\,Q\rangle_{\mathrm F}$. Hence $(D_z)^\ast=X_q$ and $(X_q)^\ast=D_z$
with respect to the Fischer inner product. (In contrast, the oscillator-style pair
$a=\tfrac{\sqrt2}{2}(X_q+iP_q)$, $a^\dagger=\tfrac{\sqrt2}{2}(X_q-iP_q)$ 
is {not} an adjoint pair for the Fischer product.)
\end{remark}

\subsection{Realization of $\mathcal{F}_q(\mathbb{C})$  in $L^{2}(\mathbb{C}; e^{-z\bar z} dxdy)$}\label{secL2}

The family of functions $\mathcal{F} := \{z_q^n\}_{n \in \mathbb{N}}$ is not composed of holomorphic functions. Indeed, for any $ n \in \mathbb{N} $, each $ z_q^n $ can be expressed as~\cite{Pashaev2014,Am}
\[
z_q^{n+1} = z\left(\frac{1+q}{2}z + \frac{1-q}{2}\bar{z}\right)
\left(\frac{1+q^2}{2}z + \frac{1-q^2}{2}\bar{z}\right)
\dots \left(\frac{1+q^n}{2}z + \frac{1-q^n}{2}\bar{z}\right).
\]
If we define $ A_i = \frac{1 + q^{i}}{2} $ and $ B_i = \frac{1 - q^{i}}{2} $, then
\[
z_q^{n+1}  = z(A_1 z + B_1 \bar{z}) (A_2 z + B_2 \bar{z}) \dots (A_n z + B_n \bar{z}),
\]
and hence,
$$
z_q^{n+1} = \sum_{i + j = n} C_{i,j} z^{i+1} \bar{z}^j  = z\sum_{i + j = n} C_{i,j} z^{i} \bar{z}^j , \quad 
C_{i,j} = \sum_{\substack{|S| = i \\ S \cup S' = \{1, \dots, n\} \\  S \cap S' = \emptyset}} A_S B_{S'},
$$
where, if $ S = \{k_1, \dots, k_i\} $ and $ S' = \{h_1, \dots, h_j\} $, then $ A_S = A_{k_1} \dots A_{k_i} $ and $ B_{S'} = B_{h_1} \dots B_{h_j} $.
{ Similarly, }
$${ \bar{z}_q^{n+1} = \bar{z_q^{n+1}} = \sum_{i + j = n} C_{i,j} \bar z^{i+1} {z}^j  = \bar{z} \sum_{i + j = n} C_{i,j} \bar{z}^{i} z^j  . }
$$
Since these polynomials are "unbalanced" in powers of $ z $ and $ \bar{z} $, we do not expect the $q$-Fock space to be realized as a concrete Hilbert space of holomorphic functions. However, any monomial $ z^N \bar{z}^M $ can be expressed as a linear combination of suitable complex Hermite polynomials~\cite{Ghanmi}.
Set $p,r\in\mathbb{N}$. We define the complex Hermite polynomials in $z$ and $\bar z$ as
\begin{eqnarray}\label{hermite}
H_{p,r}(z, \bar{z}) = p! r! \sum_{k=0}^{\min(p,r)} \frac{(-1)^k}{k!} \frac{z^{p-k}}{(p-k)!} \frac{\bar{z}^{r-k}}{(r-k)!}.
\end{eqnarray}
This family forms an orthogonal basis for the Hilbert space~\cite{Ghanmi}
\[
L^2(\mathbb{C}; e^{-z\bar z} \, dx\,dy):=\{f:\mathbb{C}\to\mathbb{C}\,|\,\int_{\mathbb{C}} ||f||^{2}\, e^{-z\bar z} \, \mathrm{d}x \, \mathrm{d}y<\infty\},
\]
with orthogonality relation:
\[
\int_{\mathbb{C}} H_{p,r}(z, \bar{z}) \, \overline{H_{m,n}(z, \bar{z})} \, e^{-z\bar z} \, \mathrm{d}x \, \mathrm{d}y = \pi \, \delta_{pm} \, \delta_{rn} \, p! \, r!.
\]

Unfortunately, the family $ \{z_q^n\}_{n \in \mathbb{N}} $ is not complete in $ L^2(\mathbb{C}; e^{-z\bar z} \, \mathrm{d}x\,\mathrm{d}y) $, since $ \bar{z} $ cannot be expressed as a linear combination of these functions. The same holds for $ \{\bar{z}_q^n\}_{n \in \mathbb{N}} $. To obtain a complete family, one must consider all products $ z_q^k \bar{z}_q^h $.

\begin{proposition}\label{propbasis}
The set $ \{z_q^k \bar{z}_q^h\}_{k,h \in \mathbb{N}} $ forms a basis of the Hilbert space $ L^2(\mathbb{C}; e^{-z\bar z} \, \mathrm{d}x\,\mathrm{d}y) $.
\end{proposition}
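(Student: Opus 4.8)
I would split the statement into two parts: that the polynomial algebra $\mathbb{C}[z,\bar z]$ is dense in $L^2(\mathbb{C};e^{-z\bar z}\,dx\,dy)$, and that $\{z_q^k\bar z_q^h\}_{k,h\in\mathbb{N}}$ is a vector--space basis of $\mathbb{C}[z,\bar z]$. Together these give exactly what ``basis'' means here: a linearly independent family with dense linear span (it cannot be an orthonormal basis, since the $z_q^k\bar z_q^h$ are not pairwise orthogonal). The density part is immediate from what has already been recalled: each complex Hermite polynomial $H_{p,r}(z,\bar z)$ lies in $\mathbb{C}[z,\bar z]$, and since the $H_{p,r}$ form a complete orthogonal system of $L^2(\mathbb{C};e^{-z\bar z}\,dx\,dy)$, their span -- hence $\mathbb{C}[z,\bar z]$ -- is already dense. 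So the whole burden is the algebraic statement.

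For that I would use the grading of $\mathbb{C}[z,\bar z]=\mathbb{C}[x,y]$ by total degree. Since $z_q^k$ is homogeneous of degree $k$ and $\bar z_q^h$ of degree $h$, the product $z_q^k\bar z_q^h$ lies in the space $\mathcal P_n$ of polynomials homogeneous of degree $n=k+h$, and $\dim\mathcal P_n=n+1$; moreover a finite linear combination of the $z_q^k\bar z_q^h$ vanishes if and only if each of its homogeneous pieces does. Hence it suffices to show that for every $n$ the $n+1$ vectors $z_q^0\bar z_q^n,\,z_q^1\bar z_q^{n-1},\dots,\,z_q^n\bar z_q^0$ are linearly independent in $\mathcal P_n$; by the dimension count they then form a basis of $\mathcal P_n$, the union over $n$ is an algebraic basis of $\mathbb{C}[z,\bar z]$, and combined with the density above this is the asserted basis property.

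To establish that independence I would dehomogenize. Using $z=x+iy$ one has $z_q^k=\prod_{m=0}^{k-1}(x+iq^m y)$ and $\bar z_q^{n-k}=\prod_{m=0}^{n-k-1}(x-iq^m y)$, so if $\sum_{k=0}^n c_k\,z_q^k\bar z_q^{n-k}=0$ then, setting $y=1$,
\[
\sum_{k=0}^{n}c_k\,\widetilde P_k(x)=0\ \ (\forall x\in\mathbb{C}),\qquad
\widetilde P_k(x):=\prod_{m=0}^{k-1}(x+iq^m)\prod_{m=0}^{n-k-1}(x-iq^m),
\]
where each $\widetilde P_k$ has degree exactly $n$. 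Because $0<q<1$, the powers $q^0,q^1,q^2,\dots$ are pairwise distinct, and the roots of $\widetilde P_k$ are $\{-iq^m:0\le m\le k-1\}\cup\{iq^m:0\le m\le n-k-1\}$; thus $\widetilde P_k(iq^{j})=0$ exactly when $j\le n-k-1$, i.e.\ $k<n-j$. Evaluating the identity successively at $x=i,\,iq,\,iq^2,\dots,\,iq^{n-1}$ then isolates one coefficient at a time: at $x=iq^{j}$ every term with $k<n-j$ drops out, while
\[
\widetilde P_{n-j}(iq^{j})=i^{n}\prod_{m=0}^{n-j-1}(q^{j}+q^{m})\prod_{m=0}^{j-1}(q^{j}-q^{m})\neq0,
\]
the first product being a product of positive reals and each factor $q^{j}-q^{m}$ with $m<j$ nonzero since $q<1$. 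Downward induction on $k$ then gives $c_n=c_{n-1}=\dots=c_1=0$, after which $c_0\widetilde P_0\equiv0$ forces $c_0=0$.

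The decisive point is this last step. Unlike the ordinary monomials $z^k$, the polynomials $z_q^k$ and $\bar z_q^h$ genuinely involve both $z$ and $\bar z$, so linear independence cannot be read off by comparing leading terms in a single variable; the needed triangular structure surfaces only after dehomogenizing and testing against the geometric progression $\{iq^{j}\}$, which is precisely where the standing hypothesis $0<q<1$ (distinctness of the $q^m$, hence of the roots of the $\widetilde P_k$) is used in an essential way. Everything else -- the density of $\mathbb{C}[z,\bar z]$ and the passage from ``linearly independent with dense span'' to the stated basis property -- is routine given the results already recorded.
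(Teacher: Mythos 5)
Your proof is correct, and it shares the paper's overall skeleton --- density of $\mathbb{C}[z,\bar z]$ via the complex Hermite polynomials, reduction to homogeneous components of fixed total degree $n$, and a dimension count that turns the problem into linear independence of the $n+1$ products $z_q^k\bar z_q^{n-k}$ --- but it diverges at the crucial linear independence step. The paper proceeds by an iterated peeling: it kills the extreme coefficients $D_{n,0}$ and $D_{0,n}$ by inspecting the pure $z^n$ and $\bar z^n$ terms, factors out $z\bar z$, rewrites what remains in the ``elliptic'' variable $w=x+iqy$, and repeats, invoking a separate lemma (linear independence of $\{w^j\bar w^k\}_{j+k=m}$ for elliptic $w$, proved via polar coordinates and a Vandermonde determinant) at each stage, eventually terminating in the hand-checked cases $N=2,3$. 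You instead dehomogenize by setting $y=1$, observe that each $\widetilde P_k(x)=\prod_{m=0}^{k-1}(x+iq^m)\prod_{m=0}^{n-k-1}(x-iq^m)$ vanishes at $iq^j$ precisely when $k<n-j$, and extract the coefficients one at a time by evaluating along the geometric progression $i, iq,\dots,iq^{n-1}$; the only facts used are that the $q^m$ are distinct positive reals (so the nonvanishing of $\widetilde P_{n-j}(iq^j)$ is immediate) and that a one-variable polynomial vanishing on $\mathbb{R}$ vanishes on $\mathbb{C}$, which justifies plugging in the imaginary evaluation points. Your route is shorter and more elementary: it replaces the induction-plus-elliptic-variable machinery with a single triangular evaluation argument and needs no auxiliary lemma. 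What the paper's approach buys in exchange is the explicit introduction of the elliptic complex variables $z_{q^\ell}$ and the independence lemma for them, which the authors reuse conceptually elsewhere; your argument keeps everything inside ordinary one-variable polynomial algebra.
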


Before performing the proof of Proposition~\ref{propbasis}, we need to state a remark and a technical lemma.
\begin{remark}[Elliptic complex variables]\label{elliptic}
For our consideration, we need to take into account complex variables of the following type:
$w=x+ipy$, with $0<p<\infty$. 
These are also called \textit{elliptic complex numbers}\footnote{Elliptic complex numbers were defined in~\cite{harkin} but our approach is slightly different.}.
Therefore, by transforming into polar coordinates, we get:
\[w\bar w=x^{2}+p^{2}y^{2}=r_{p}^{2}.\]
Hence, $\frac{w}{|w|}=\frac{x}{r_{p}}+i\frac{py}{r_{p}}$ is a unit complex number wich we
can write as $\cos(\varphi)+i\sin(\varphi)$, where $\cos(\varphi)=\frac{x}{r_{p}}$
and $\sin(\varphi)=\frac{py}{r_{p}}$. Then we get a relation between our polar coordinates $(r_{p},\varphi)$ and $(x,y)$, where $r_{p}=\sqrt{x^{2}+p^{2}y^{2}}$ and $\frac{1}{p}\frac{x}{y}=\tan\varphi$.
\end{remark}

\begin{lemma}\label{lemmaelliptic}
Let $w$ denote the elliptic complex number $x+ipy$. Then, for any $n\in\mathbb{N}$,
the family $\{w^{j}\bar w^{k}\}_{j+k=n}$ is a linearly independent set. 
\end{lemma}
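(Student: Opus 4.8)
\textbf{Proof plan for Lemma~\ref{lemmaelliptic}.}

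The plan is to reduce the claimed linear independence of $\{w^{j}\bar w^{k}\}_{j+k=n}$ to the classical fact that $\{z^{j}\bar z^{k}\}_{j+k=n}$ is linearly independent, via the invertible real-linear change of variables that sends the standard complex structure to the elliptic one. Concretely, write $w = x + ipy$ and $\bar w = x - ipy$. Then
\[
x = \frac{w+\bar w}{2}, \qquad y = \frac{w-\bar w}{2ip},
\]
so the $\mathbb{R}$-linear map $\Psi\colon(x,y)\mapsto(x,py)$ is a bijection of $\mathbb{R}^2$ onto itself (here we use $0<p<\infty$), and under $\Psi$ the function $w$ pulls back to the ordinary complex coordinate. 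More precisely, if $u = x + ipy = \Re(w) + i\,\Im(w)$ and we set $\zeta = \Psi(x,y)_1 + i\,\Psi(x,y)_2 = x + ipy$, we simply observe that $w$ \emph{is} the ordinary complex coordinate on the target copy of $\mathbb{C}$. Thus any relation $\sum_{j+k=n} c_{j,k}\, w^{j}\bar w^{k} \equiv 0$ on $\mathbb{C}$ becomes, after substituting $w = \zeta$, a relation $\sum_{j+k=n} c_{j,k}\, \zeta^{j}\bar\zeta^{k}\equiv 0$ on $\mathbb{C}$.

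The key step is then to invoke the standard fact that the monomials $\{\zeta^{j}\bar\zeta^{k}\}_{j+k=n}$ are linearly independent as functions on $\mathbb{C}$; indeed, writing $\zeta = r e^{i\theta}$ one gets $\zeta^{j}\bar\zeta^{k} = r^{n} e^{i(j-k)\theta}$, and for fixed $r>0$ the exponentials $e^{i(j-k)\theta}$ with $j-k \in \{-n, -n+2,\dots, n\}$ are distinct characters of the circle, hence linearly independent; so $\sum_{j+k=n} c_{j,k}\,\zeta^{j}\bar\zeta^{k}=0$ forces all $c_{j,k}=0$. (Alternatively one differentiates: applying $\partial^{a}\bar\partial^{b}$ at the origin picks out a single coefficient.) Transporting this conclusion back through $\Psi$ gives that all $c_{j,k}$ vanish, which is exactly the asserted linear independence.

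I do not expect a genuine obstacle here; the only point requiring a little care is making the change of variables precise enough that it is clearly a \emph{bijective} substitution and hence preserves (non)vanishing of functions — this is where the hypothesis $0<p<\infty$ is used, and where one should note that elliptic numbers, despite not forming a field isomorphic to $\mathbb{C}$ in general, still give a perfectly good real coordinate system on $\mathbb{R}^2$. A cosmetic alternative avoiding any mention of elliptic numbers is to argue directly: from $\sum_{j+k=n} c_{j,k} w^{j}\bar w^{k}=0$, substitute $x = r_p\cos\varphi$, $py = r_p\sin\varphi$ using the polar coordinates of Remark~\ref{elliptic}, obtaining $r_p^{n}\sum_{j+k=n} c_{j,k} e^{i(j-k)\varphi}=0$ for all $r_p\ge 0$ and all $\varphi$, and conclude as above by orthogonality of the characters $e^{i(j-k)\varphi}$ on $[0,2\pi]$. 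Either route closes the lemma.
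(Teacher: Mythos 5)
Your proposal is correct. Your ``cosmetic alternative'' at the end is in fact almost exactly the paper's proof: the paper also passes to the elliptic polar coordinates of Remark~\ref{elliptic}, divides out $r_p^{n}$, and is left with $\sum_{j=0}^{n} a_{j,n-j}e^{i(2j-n)\varphi}=0$. The only divergence is the final step: where you invoke orthogonality of the characters $e^{im\varphi}$ on $[0,2\pi]$ (or differentiation at the origin), the paper multiplies by $e^{in\varphi}$ to obtain a polynomial identity $\sum_{j}a_{j,n-j}(e^{2i\varphi})^{j}=0$, evaluates it at $n+1$ angles $\varphi_\ell=\tfrac{\pi\ell}{2(n+1)}$ producing distinct values of $e^{2i\varphi_\ell}$, and kills the coefficients via a Vandermonde determinant --- a finite sampling argument rather than an integral one. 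Your primary route, reducing to the classical monomials $\zeta^{j}\bar\zeta^{k}$ via the real-linear bijection $(x,y)\mapsto(x,py)$, is a genuinely different and arguably cleaner framing: it makes transparent that the elliptic structure is just an invertible change of real coordinates (precisely where $0<p<\infty$ enters), so the lemma is literally the classical statement in disguise, at the cost of having to state carefully that identical vanishing is preserved under a bijective substitution. The paper's intrinsic computation avoids that bookkeeping and stays entirely within the $(r_p,\varphi)$ coordinates it has already set up. Both arguments are complete; there is no gap in yours.
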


\begin{proof}
Let us consider the following linear combination:
\[\sum_{j+k=n}a_{j,k}w^{j}\bar w^{k}=0,\]
where $a_{i,j}\in\mathbb{C}$ for all $i+j=n$.
By means of polar coordinates (see Remark~\ref{elliptic}), we get 
\[\sum_{j+k=n}a_{j,k}r_{p}^{n}e^{i(j-k)\varphi}=0.\]
First of all we can drop $r_{p}^{n}$ since it is different from zero if $w\neq 0$.
Then, we may write
\[0=\sum_{j+k=n}a_{j,k}e^{i(j-k)\varphi}=\sum_{j=0}^{n}a_{j,n-j}e^{i\varphi(2j-n)}.\]
Let us now multiply both sides by $e^{in\varphi}$. Then, for any $\varphi$ we get
\begin{equation}\label{phi}
0=\sum_{j=0}^{n}a_{j,n-j}(e^{2i\varphi})^{j}.
\end{equation}

Let now $\varphi_{\ell}:=\frac{\pi\ell}{2(n+1)}$, for $\ell=0,\dots,n$. Then, for any
$\ell$, equation~\eqref{phi} is a linear equation in the $a_{j,n-j}$ variables.
This is a system of $n+1$ equations in $n+1$ variables and the matrix of the system is a Vandermonde matrix $V=V(\varphi_{0},\dots,\varphi_{n})$ whose determinant is non-zero.
This implies that $a_{j,k}=0$ for all $j+k=n$ and we are done.
\end{proof}

The proof of the previous Lemma was inspired by~\cite{mathexc}.

\begin{proof}[Proof of Proposition~\ref{propbasis}]
The space $ L^2(\mathbb{C}; e^{-z\bar z} \, \mathrm{d}x\,\mathrm{d}y) $ admits both the Hermite polynomials $ H_{p,r} $ (see~\ref{hermite}) and the monomials $ z^p \bar{z}^r $ as bases, the former orthogonal, the latter not. 
To prove that $ \{z_q^k \bar{z}_q^h\}_{k,h \in \mathbb{N}} $ is a basis, it suffices to note that each such element is a linear combination of $ z^p \bar{z}^r $, hence lies in the Hilbert space. 
It remains to show that, for each fixed $ N \in \mathbb{N} $, the subfamily $ \{z_q^k \bar{z}_q^h\}_{k + h = N} $ forms a basis for the $ \mathrm{span}\{z^p \bar{z}^r \mid p + r = N\} $.

The number of elements is the same in both sets, so it suffices to prove linear independence. 

For $ N = 1 $, we have $ z_q = z $ and $ \bar{z}_q = \bar{z} $, which are linearly independent. For $ N = 2 $,
\[
z_q^2 = z(A_1 z + B_1 \bar{z}) = A_1 z^2 + B_1 z \bar{z}, \quad
z_q \bar{z}_q = z \bar{z}, \quad
\bar{z}_q^2 = \bar{z}(A_1 \bar{z} + B_1 z) = B_1 z \bar{z} + A_1 \bar{z}^2.
\]
Suppose a linear combination $ D_{2,0} z_q^2 + D_{1,1} z_q \bar{z}_q + D_{0,2} \bar{z}_q^2 = 0 $. Since $ z^2 $ appears only in $ z_q^2 $, it follows that $ D_{2,0} = 0 $; similarly $ D_{0,2} = 0 $ from the term $ \bar{z}^2 $. Therefore, $ D_{1,1} = 0 $ as well.

We now pass to the case $N=3$ and then proceed for the general case.
We have
\[
\begin{cases}
z_q^3=z(A_1 z + B_1 \bar{z}) (A_2 z + B_2 \bar{z}) \\
z_q^2\bar z_{q}=z(A_1 z + B_1 \bar{z}) \bar z\\
z_q\bar z_q^2=z\bar z(B_1 z + A_1 \bar{z}) \\
\bar z_q^3=\bar z(B_1 z + A_1 \bar{z}) (B_2 z + A_2 \bar{z}).
\end{cases}
\]
Hence, if we consider a linear combination
\[D_{3,0}z_q^3+D_{2,1}z_q^2\bar z_{q}+D_{1,2}z_q\bar z_q^2+D_{0,3}\bar z_q^3=0,\]
then the only summands containing $z^{3}$ and $\bar z^{3}$ are the first and the last, respectively.
Therefore, $D_{3,0}=D_{0,3}=0$ and we are left with
\begin{align*}
0&=D_{2,1}z_q^2\bar z_{q}+D_{1,2}z_q\bar z_q^2=z\bar z(D_{2,1}(A_1 z + B_1 \bar{z})+D_{1,2}(B_1 z + A_1 \bar{z}))\\
&=z\bar z((D_{2,1}A_{1}+D_{1,2}B_{1})z+(D_{2,1}B_{1}+D_{1,2}A_{1})\bar z).
\end{align*}
Now, since $A_{1}^{2}-B_{1}^{2}\neq 0$, then we get that $D_{2,1}=D_{1,2}=0$.

 Consider now the family $ \{z_q^k \bar{z}_q^h\}_{k + h = N+1} $ and suppose:
\[
\sum_{k+h = N+1} D_{k,h} z_q^k \bar{z}_q^h = 0.
\]
As before, the only terms containing $ z^{N+1} $ and $ \bar{z}^{N+1} $ appear in $ z_q^{N+1} $ and $ \bar{z}_q^{N+1} $, respectively, so $ D_{N+1,0} = D_{0,N+1} = 0 $. What remains is:
\begin{equation}\label{sumC}
D_{N,1} z_q^N \bar{z}_q + D_{N-1,2} z_q^{N-1} \bar{z}_q^2 + \dots + D_{1,N} z_q \bar{z}_q^N = 0.
\end{equation}
Now, for any $ k \in \mathbb{N} $, we can express
\[
z_q^k = z_{q^0} z_{q^1} \dots z_{q^{k-1}}, \quad \text{with } z_{q^\ell} := x + i q^\ell y,
\]
so $ z_q^k = z \cdot (z_{q^1})_q^{k-1} $. Then equation~\eqref{sumC} becomes:
\begin{equation}\label{eqCN}
z \bar{z} \left( D_{N,1}(z_{q^1})_q^{N-1} + D_{N-1,2}(z_{q^1})_q^{N-2} \bar{z}_{q^1} + \dots + D_{1,N}(\bar{z}_{q^1})_q^{N-1} \right) = 0,
\end{equation}
where $ z_{q^1} = x + i q y $. 

We now repeat the previous argument where $z$ is replaced by the elliptic complex variable
$w=z_{q^{1}}$ (see Remark~\ref{elliptic}). Therefore, Formula~\eqref{eqCN} can be written as
 \begin{equation}\label{eqCN2}
z \bar{z} \left( D_{N,1}w_q^{N-1} + D_{N-1,2}w_q^{N-2} \bar w_{q} + \dots + D_{1,N}\bar w_q^{N-1} \right) = 0.
\end{equation}
Similarly as before we have $(\bar w_{q})^{M}=\overline{w_{q}^{M}}$ and,
\[
w_q^{M} = w\left(A_{1}w + B_{1}\bar{w}\right)
\left(A_{2}w + B_{2}\bar{w}\right)
\dots \left(A_{M-1}w + B_{M-1}\bar{w}\right).
\]
Hence, again the only instances of $w^{N-1}$ and of $\bar w^{N-1}$ are contained in the first and last summands of Equation~\eqref{eqCN2}, respectively. 
Therefore, thanks to Lemma~\ref{lemmaelliptic}, $D_{N,1}$ and
$D_{1,N}$ must be equal to zero.

Again, as before, we are allowed to collect $w\bar w$ from 
the left hand side of Formula~\eqref{eqCN2} and so we are left with 
$$z \bar{z} w\bar w\left( D_{N-1,2}w_q^{N-3} + \dots +D_{2,N-1}\bar w_{q}^{N-3} \right) = 0.
$$
We now start an iteration process in which at each step we define a new elliptic complex variable $z_{q^{\ell}}$, so then the first and last coefficients $D_{N-\ell,\ell+1}$ and
$D_{\ell+1,N-\ell}$ must vanish and we can factorize $z_{q^{\ell}}\bar z_{q^{\ell}}$ again.

Eventually we will end up in one of the two cases $N=2$ or $N=3$ discussed before, depending whether the starting power $N+1$ is even or odd. 
\end{proof}

\section{A $q$-Bargmann transform}

In this section we construct a $q$-Bargmann transform that identifies the
 Jackson $L^2_{q}$-space on $[-\lambda,\lambda]$ with the $q$–Fock space
endowed with the Fischer inner product. We introduce the kernel,
compute the image of the $q$–Hermite basis, and prove unitarity.

Fix $\lambda=\frac{1}{\sqrt{1-q^2}}$. Let $\{H_k^q\}_{k\ge0}$ be the $q$–Hermite polynomials discussed in Section~\ref{secHerm}
and define the $q$–Hermite functions
\[
\tilde{H}_k^q(t):=\frac{1}{\sqrt{\Lambda}}\,H_k^q(t)\,\sqrt{e_{q^2}(-t^2)}\,,
\]
where $\Lambda$ is given in Formula~\eqref{biglambda}.
By construction the family $\{\tilde{H}_k^q\}_{k\ge0}$ is orthonormal in
$L_q^2(\mathbb{R},\lambda)$ with respect to the Jackson inner product
\[
\langle f,g\rangle_{L^2_q}=\Jint_{-\lambda}^{\lambda} f(t)\,\overline{g(t)}\,\mathrm{d}_q t.
\]
On the $q$–Fock space $\mathcal{F}_q(\mathbb{C})$ we use the orthonormal basis
$\big\{\frac{z_q^n}{\sqrt{[n]_q!}}\big\}_{n\ge0}$.

\begin{definition}
Define the kernel
\begin{equation}\label{eq:Bkernel}
A(z_q,t):=\sum_{n=0}^{\infty}\frac{z_q^n}{\sqrt{[n]_q!}}\;\tilde{H}_n^q(t),
\end{equation}
and the linear map $\mathcal{B}_q:L_q^2(\mathbb{R},\lambda)\to\mathcal{F}_q(\mathbb{C})$ by
\begin{equation}\label{eq:Btransform}
(\mathcal{B}_qf)(z_q):=\Jint_{-\lambda}^{\lambda} f(t)\,A(z_q,t)\,\mathrm{d}_q t.
\end{equation}
\end{definition}

\begin{proposition}\label{prop:BasisImage}
For every $m\ge0$,
$$\mathcal{B}_q(\tilde{H}_m^q)(z_q)=\frac{z_q^m}{\sqrt{[m]_q!}}.
$$\end{proposition}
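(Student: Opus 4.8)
The plan is to substitute the definition~\eqref{eq:Bkernel} of the kernel $A(z_q,t)$ into the transform~\eqref{eq:Btransform} and integrate term by term, exploiting the orthonormality of the $q$–Hermite functions $\{\tilde{H}_n^q\}$ recorded in Formula~\eqref{ForLambda}. The only genuine issue is to license the exchange of the Jackson integral with the defining series of $A$; once this is justified, the computation collapses in a single line.

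First I would observe that each $\tilde{H}_n^q$ is real–valued on the support $\{\pm\lambda q^j : j\in\mathbb{N}\}$ of the Jackson inner product (it is a real polynomial times $\sqrt{e_{q^2}(-t^2)}$, and $e_{q^2}(-t^2)>0$ on that set), and that, by the chosen normalization and Formula~\eqref{ForLambda}, the family $\{\tilde{H}_n^q\}_{n\ge0}$ is orthonormal in $L^2_q(\mathbb{R},\lambda)$, i.e. $\langle\tilde{H}_n^q,\tilde{H}_m^q\rangle_{L^2_q}=\delta_{nm}$.

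Next I would show that, for $z$ with $|z|^2<\frac{1}{1-q}$, the kernel $A(z_q,\cdot)$ belongs to $L^2_q(\mathbb{R},\lambda)$ and that the series~\eqref{eq:Bkernel} converges to it in the $L^2_q$–norm. Since the $\tilde{H}_n^q$ are orthonormal, the squared $L^2_q$–norm of the $N$-th tail of the series equals $\sum_{n>N}|z_q^n|^2/[n]_q!$, and by Remark~\ref{remnorm} ($|z_q^n|\le|z|^n$) this is bounded by $\sum_{n>N}|z|^{2n}/[n]_q!$, a tail of $E_q(|z|^2)$, which converges in the stated disc. Hence the partial sums are Cauchy in $L^2_q$, so $A(z_q,\cdot)=\sum_{n\ge0}\frac{z_q^n}{\sqrt{[n]_q!}}\,\tilde{H}_n^q$ with convergence in norm.

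Finally, since $\tilde{H}_m^q$ is real, $(\mathcal{B}_q\tilde{H}_m^q)(z_q)=\Jint_{-\lambda}^{\lambda}\tilde{H}_m^q(t)\,A(z_q,t)\,\mathrm{d}_q t=\langle A(z_q,\cdot),\tilde{H}_m^q\rangle_{L^2_q}$, and by continuity of the inner product together with the norm convergence just established, $\langle A(z_q,\cdot),\tilde{H}_m^q\rangle_{L^2_q}=\sum_{n\ge0}\frac{z_q^n}{\sqrt{[n]_q!}}\,\langle\tilde{H}_n^q,\tilde{H}_m^q\rangle_{L^2_q}=\frac{z_q^m}{\sqrt{[m]_q!}}$, which is the assertion. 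The main obstacle is the third step — verifying $A(z_q,\cdot)\in L^2_q$ and the $L^2_q$–convergence of its defining series — because this is precisely what makes the term-by-term Jackson integration legitimate; everything else is a direct application of orthonormality.
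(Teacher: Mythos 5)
Your proposal is correct and follows essentially the same route as the paper: expand the kernel via~\eqref{eq:Bkernel}, integrate term by term, and invoke orthonormality of the $\tilde H_n^q$. The only difference lies in how the interchange of sum and Jackson integral is licensed --- the paper observes that the partial integrals $\varphi_N$ vanish for $N<m$ and stabilize at $\frac{z_q^m}{\sqrt{[m]_q!}}$ for $N\ge m$, whereas you establish $L^2_q$-norm convergence of the kernel series (making explicit the restriction $|z|^2<\frac{1}{1-q}$ under which $A(z_q,\cdot)\in L^2_q$) and then use continuity of the inner product, which is arguably the more careful justification of the same step.
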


\begin{proof}
Using \eqref{eq:Bkernel}–\eqref{eq:Btransform} and orthonormality of $\{\tilde H_n^q\}$,
\[
\mathcal{B}_q(\tilde{H}_m^q)
=\Jint_{-\lambda}^{\lambda}\tilde{H}_m^q(t)\!\left(\sum_{n=0}^{\infty}
\frac{z_q^n}{\sqrt{[n]_q!}}\tilde{H}_n^q(t)\right)\!\mathrm{d}_q t.
\]
Now consider the sequence:
\[
\left(\varphi_N\right)_{N \in \mathbb{N}} := 
\left( \Jint_{-\lambda}^{\lambda} \tilde{H}_m^q(t) 
\sum_{n=0}^{N} \frac{z_q^n}{\sqrt{[n]_q!}} \tilde{H}_n^q(t) \, \mathrm{d}_q t \right)_{N\in\mathbb{N}}.
\]
This sequence satisfies:
\[
\varphi_N = 0 \quad \text{for } N < m, \quad 
\varphi_N = \frac{z_q^m}{\sqrt{[m]_q!}} \quad \text{for } N \geq m.
\]
Therefore, it becomes constant for \( N \geq m \) and converges to that value. Using the orthonormality relation from Equation~\eqref{ForLambda}, we conclude:
$$\mathcal{B}_q(\tilde{H}_m^q)(z_{q}) 
= \sum_{n=0}^{\infty} \frac{z_q^n}{\sqrt{[n]_q!}} 
\Jint_{-\lambda}^{\lambda} \tilde{H}_n^q(t) \tilde{H}_m^q(t) \, \mathrm{d}_q t  
= \sum_{n=0}^{\infty} \frac{z_q^n}{\sqrt{[n]_q!}} \delta_{n,m}  
= \frac{z_q^m}{\sqrt{[m]_q!}}.
$$\end{proof}

\begin{theorem}\label{thm:unitary-B}
The map $\mathcal{B}_q:L_q^2(\mathbb{R},\lambda)\to\mathcal{F}_q(\mathbb{C})$ is unitary.
\end{theorem}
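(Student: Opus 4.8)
The plan is to show that $\mathcal{B}_q$ is a bijective isometry by exploiting the fact that it sends one orthonormal basis to another. Concretely, by Proposition~\ref{prop:BasisImage} we already know that $\mathcal{B}_q(\tilde H_m^q) = z_q^m/\sqrt{[m]_q!}$ for every $m\ge0$. By construction $\{\tilde H_m^q\}_{m\ge0}$ is an orthonormal basis of $L^2_q(\mathbb{R},\lambda)$ (this is the content of the orthogonality relation~\eqref{ForLambda} together with the definition of that space as the closure of the span), and $\{z_q^n/\sqrt{[n]_q!}\}_{n\ge0}$ is an orthonormal basis of $\mathcal{F}_q(\mathbb{C})$ with respect to the Fischer inner product~\eqref{eq:qFischer}, since $\langle z_q^n,z_q^m\rangle_{\mathrm F}=[n]_q!\,\delta_{nm}$. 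A linear map between Hilbert spaces that carries an orthonormal basis bijectively onto an orthonormal basis extends uniquely to a unitary operator; so the theorem will follow once we check that $\mathcal{B}_q$ is the (bounded) linear extension of this basis-to-basis correspondence.

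The key steps, in order, are as follows. First I would record that $\{\tilde H_m^q\}$ is a complete orthonormal system in $L^2_q(\mathbb{R},\lambda)$: orthonormality is immediate from~\eqref{ForLambda} after dividing by $\sqrt{\Lambda}$ and absorbing the weight $\sqrt{e_{q^2}(-t^2)}$, and completeness holds by definition of $L^2_q(\mathbb{R},\lambda)$ as the closure of $\operatorname{span}\{\tilde H_k^q\}$. Second, I would note the analogous statement on the Fock side: $\{z_q^n/\sqrt{[n]_q!}\}$ is a complete orthonormal system in $\mathcal{F}_q(\mathbb{C})$ (completeness here is part of the definition of $\mathcal{F}_q(\mathbb{C})$ as the space of $\ell^2$-summable $q$-power series with the Fischer norm). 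Third, for $f\in L^2_q(\mathbb{R},\lambda)$ write $f=\sum_{m\ge0} c_m \tilde H_m^q$ with $\sum_m|c_m|^2=\|f\|^2_{L^2_q}$; then, using linearity and continuity of $\mathcal{B}_q$ (the latter following from the Parseval-type bound obtained by interchanging the Jackson integral with the series in~\eqref{eq:Bkernel}, which is legitimate since the partial sums $\varphi_N$ stabilize as in the proof of Proposition~\ref{prop:BasisImage}) together with Proposition~\ref{prop:BasisImage}, conclude $\mathcal{B}_qf=\sum_{m\ge0} c_m\, z_q^m/\sqrt{[m]_q!}$. Fourth, compute $\|\mathcal{B}_qf\|^2_{\mathcal{F}_q(\mathbb{C})}=\sum_m|c_m|^2=\|f\|^2_{L^2_q}$, giving isometry; and observe surjectivity, since every element of $\mathcal{F}_q(\mathbb{C})$ is of the form $\sum_m d_m z_q^m/\sqrt{[m]_q!}$ with $(d_m)\in\ell^2$, hence is the image of $\sum_m d_m\tilde H_m^q\in L^2_q(\mathbb{R},\lambda)$. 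An isometric surjection is unitary.

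The main obstacle I anticipate is the interchange of the Jackson integral $\Jint_{-\lambda}^{\lambda}$ with the infinite sum in~\eqref{eq:Bkernel} when $f$ is a general $L^2_q$-element rather than a single basis function; for a finite linear combination this is trivial, but for the limit one needs a dominated-convergence or Cauchy-sequence argument adapted to the Jackson setting. The cleanest route is to avoid the issue by \emph{defining} $\mathcal{B}_q$ on finite linear combinations of the $\tilde H_m^q$ via Proposition~\ref{prop:BasisImage}, verifying it is isometric there, and then extending by density and continuity to all of $L^2_q(\mathbb{R},\lambda)$ — the extension then automatically agrees with the integral formula~\eqref{eq:Btransform} wherever the latter converges. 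With that understood, the proof is essentially the standard ``basis-to-basis map is unitary'' argument, and no delicate estimate beyond the stabilization of the partial sums $\varphi_N$ (already handled in Proposition~\ref{prop:BasisImage}) is required.
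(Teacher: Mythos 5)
Your proposal is correct and follows essentially the same route as the paper: both arguments invoke Proposition~\ref{prop:BasisImage} to see that $\mathcal{B}_q$ carries the orthonormal basis $\{\tilde H_m^q\}$ onto the orthonormal basis $\{z_q^m/\sqrt{[m]_q!}\}$, verify isometry on finite spans, and extend by continuity to a surjective isometry. Your additional care about interchanging the Jackson integral with the series in~\eqref{eq:Bkernel} is a reasonable refinement, but the underlying argument is the one the paper gives.
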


\begin{proof}
By Proposition~\ref{prop:BasisImage}, $\mathcal{B}_q$ maps the orthonormal basis
$\{\tilde H_m^q\}_{m\ge0}$ of $L_q^2(\mathbb{R},\lambda)$ onto the orthonormal basis
$\big\{\frac{z_q^m}{\sqrt{[m]_q!}}\big\}_{m\ge0}$ of $\mathcal{F}_q(\mathbb{C})$ (with Fischer product).
Hence $\mathcal{B}_q$ is an isometry on the finite span of the basis and extends by continuity
to a surjective isometry between the completions, i.e., a unitary operator.
Equivalently,
\[
\big\langle \mathcal{B}_q\tilde H_m^q,\mathcal{B}_q\tilde H_n^q\big\rangle_{\text{F}}
=\delta_{mn}
=\big\langle \tilde H_m^q,\tilde H_n^q\big\rangle_{L^2_q},
\]
hence $\mathcal{B}_q$ is unitary.
\end{proof}

\subsection{Tensor Product of the Bargmann Transform and its conjugate}
The transform introduced by V.~Bargmann is bijective and isometric from $L^2(\mathbb{R})$ onto the classical Fock space; in particular the Hermite functions are mapped to the normalized analytic monomials, which form an orthonormal basis of entire functions. 
In our setting, while we found a $q$-Bargmann transform $\mathcal{B}_q$ mapping the $q$-Hermite basis to the powers of $z_q$ (Theorem~\ref{thm:unitary-B}), the family $\{z_q^k\}_{k\ge0}$ is not a basis of $L^2(\mathbb C; e^{-z\bar z}\,dx\,dy)$. 
Instead, Section~\ref{secL2} shows that the bidimensional family $\{z_q^k\,\bar z_q^h\}_{k,h\in\mathbb N}$ is complete (Proposition~\ref{propbasis}). 
This motivates a ``tensorial'' construction, reminiscent of the classical tensor product of Bargmann transforms.

\medskip

Define the {bidimensional $q$-Fock space}
\[
\mathcal{F}^{(2)}_q(\mathbb{C})\ :=\ \overline{\mathrm{span}}\left\{\frac{z_q^k\,\bar z_q^h}{\sqrt{[k]_q!\,[h]_q!}}:\ k,h\in\mathbb{N}\right\},
\qquad 
\big\langle z_q^k\bar z_q^h,\ z_q^m\bar z_q^n\big\rangle_{\mathcal{F}^{(2)}_q}
=[k]_q!\,[h]_q!\,\delta_{km}\delta_{hn}.
\]

We introduce the factorized kernel
\begin{equation}\label{eq:tensor-kernel}
A^{(2)}(z_q,\bar z_q;\,t,s)\ :=\ A(z_q,t)\,A(\bar z_q,s)
=\frac{1}{\Lambda}\sum_{k,h\ge0}\frac{z_q^k\,\bar z_q^h}{\sqrt{[k]_q!\,[h]_q!}}\ \widetilde H^q_k(t)\,\widetilde H^q_h(s).
\end{equation}
Since $\widetilde H^q_k$ are real–valued, $A^{(2)}(z_q,\bar z_q;\,t,s)$ is Hermitian-symmetric in $(z_q,\bar z_q)$.

\begin{notation}
We write $H\widehat\otimes K$ for the {Hilbert tensor product}, i.e., the completion of
the algebraic tensor product $H\otimes K$ with respect to the inner product
$\langle u_1\!\otimes\! v_1,\,u_2\!\otimes\! v_2\rangle=\langle u_1,u_2\rangle_H\,\langle v_1,v_2\rangle_K$.
In particular,
\[
L^2_q(\mathbb{R},\lambda)\,\widehat\otimes\,L^2_q(\mathbb{R},\lambda)
\cong L^2_q([-\lambda,\lambda]^2,\lambda^{\otimes 2}),
\]
and the orthonormal basis $\{\widetilde H^q_k\}_{k\ge0}$ induces
$\{\widetilde H^q_k\otimes \widetilde H^q_h\}_{k,h\ge0}$.
\end{notation}
\begin{definition}
Define
\[
\mathcal{B}_q^{(2)}:L^2_q(\mathbb{R},\lambda)\ \widehat\otimes\ L^2_q(\mathbb{R},\lambda)\longrightarrow \mathcal{F}^{(2)}_q(\mathbb{C}),
\qquad 
(\mathcal{B}_q^{(2)}f)(z_q,\bar z_q):=\Jint_{[-\lambda,\lambda]^2}\!\! f(t,s)\,A^{(2)}(z_q,\bar z_q;\,t,s)\,\mathrm{d}_qt\,\mathrm{d}_qs.
\]
\end{definition}

\begin{theorem}
The map $\mathcal{B}_q^{(2)}$ is unitary. Moreover,
\[
\mathcal{B}_q^{(2)}\!\big(\,\widetilde H^q_k\otimes \widetilde H^q_h\,\big)(z_q,\bar z_q)
=\frac{z_q^k\,\bar z_q^h}{\sqrt{[k]_q!\,[h]_q!}}\qquad (k,h\in\mathbb{N}).
\]
\end{theorem}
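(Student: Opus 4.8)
The plan is to mimic the proof of Theorem~\ref{thm:unitary-B}, reducing the two–variable statement to the one–variable one via the tensor-product structure of both the source and target spaces. First I would compute the image of a basis element $\widetilde H^q_k\otimes\widetilde H^q_h$ under $\mathcal{B}_q^{(2)}$: plugging the factorized kernel \eqref{eq:tensor-kernel} into the definition and using the product structure of the Jackson measure on $[-\lambda,\lambda]^2$, the double Jackson integral splits as
\[
\big(\mathcal{B}_q^{(2)}(\widetilde H^q_k\otimes\widetilde H^q_h)\big)(z_q,\bar z_q)
=\left(\Jint_{-\lambda}^{\lambda}\widetilde H^q_k(t)\,A(z_q,t)\,\mathrm{d}_qt\right)
\left(\Jint_{-\lambda}^{\lambda}\widetilde H^q_h(s)\,A(\bar z_q,s)\,\mathrm{d}_qs\right).
\]
By Proposition~\ref{prop:BasisImage} the first factor equals $z_q^k/\sqrt{[k]_q!}$ and, applying the same proposition with $z_q$ replaced by $\bar z_q$ (equivalently, conjugating), the second equals $\bar z_q^h/\sqrt{[h]_q!}$. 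Multiplying gives the stated formula.

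Next I would conclude unitarity exactly as in Theorem~\ref{thm:unitary-B}. The family $\{\widetilde H^q_k\otimes\widetilde H^q_h\}_{k,h\ge0}$ is an orthonormal basis of $L^2_q(\mathbb{R},\lambda)\,\widehat\otimes\,L^2_q(\mathbb{R},\lambda)$ by the definition of the Hilbert tensor product and the orthonormality of $\{\widetilde H^q_k\}$ in $L^2_q(\mathbb{R},\lambda)$. On the target side, $\big\{z_q^k\bar z_q^h/\sqrt{[k]_q!\,[h]_q!}\big\}_{k,h\ge0}$ is an orthonormal basis of $\mathcal{F}^{(2)}_q(\mathbb{C})$: orthonormality is immediate from the defining inner product $\langle z_q^k\bar z_q^h,z_q^m\bar z_q^n\rangle_{\mathcal{F}^{(2)}_q}=[k]_q!\,[h]_q!\,\delta_{km}\delta_{hn}$, and completeness holds because $\mathcal{F}^{(2)}_q(\mathbb{C})$ is defined as the closed span of these vectors. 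Since $\mathcal{B}_q^{(2)}$ sends one orthonormal basis bijectively onto the other, it is an isometry on the algebraic span and extends by continuity to a surjective isometry between the completions, i.e.\ a unitary operator. Equivalently, one checks directly that $\big\langle\mathcal{B}_q^{(2)}(\widetilde H^q_k\otimes\widetilde H^q_h),\mathcal{B}_q^{(2)}(\widetilde H^q_m\otimes\widetilde H^q_n)\big\rangle_{\mathcal{F}^{(2)}_q}=\delta_{km}\delta_{hn}=\big\langle\widetilde H^q_k\otimes\widetilde H^q_h,\widetilde H^q_m\otimes\widetilde H^q_n\big\rangle$.

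The only point requiring genuine care — the main obstacle — is justifying the separation of the double Jackson integral into a product of two single Jackson integrals and the interchange of this integration with the infinite sum defining $A^{(2)}$. For the first issue one uses that the Jackson $q$-integral over $[-\lambda,\lambda]^2$ with the product weight is, by definition, a doubly-indexed absolutely convergent sum $\sum_{j,\ell}(\cdots)$ that factors as a product of two single sums whenever the integrand factors; this is the discrete analogue of Fubini and is unconditional here because, after pairing with the polynomial $\widetilde H^q_k\otimes\widetilde H^q_h$ (which decays like a Gaussian against the weight $e_{q^2}$), everything is absolutely summable. For the interchange with the sum over $(k,h)$, one argues exactly as in Proposition~\ref{prop:BasisImage}: the partial sums $\sum_{k\le K,\,h\le H}$ produce a sequence of functions that is eventually constant (equal to the single term surviving the orthonormality relations \eqref{ForLambda} in each variable) once $K\ge k$ and $H\ge h$, so no delicate convergence estimate beyond what was already established in the one–variable case is needed. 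Once these bookkeeping points are in place, the argument is a direct tensorization of the proof of Theorem~\ref{thm:unitary-B}.
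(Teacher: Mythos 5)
Your proof is correct and follows essentially the same route as the paper: both exploit the factorized kernel $A^{(2)}(z_q,\bar z_q;t,s)=A(z_q,t)A(\bar z_q,s)$ together with the orthonormality of $\{\widetilde H^q_n\}$ in each variable to send the tensor basis onto $\{z_q^k\bar z_q^h/\sqrt{[k]_q!\,[h]_q!}\}$, and then deduce unitarity from the basis-to-basis correspondence. Your explicit reduction to two applications of Proposition~\ref{prop:BasisImage} and your remarks on splitting the double Jackson integral are merely a slightly more careful packaging of the same computation.
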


\begin{proof}
Using \eqref{eq:tensor-kernel} and orthonormality of $\{\widetilde H^q_n\}$ in $L^2_q(\mathbb{R},\lambda)$, we get
\[
\mathcal{B}_q^{(2)}(\widetilde H^q_k\!\otimes\!\widetilde H^q_h)
=\sum_{m,n\ge0}\frac{z_q^m\,\bar z_q^n}{\sqrt{[m]_q!\,[n]_q!}}
\left(\Jint_{-\lambda}^{\lambda}\!\widetilde H^q_k(t)\widetilde H^q_m(t)\,\mathrm{d}_qt\right)
\left(\Jint_{-\lambda}^{\lambda}\!\widetilde H^q_h(s)\widetilde H^q_n(s)\,\mathrm{d}_qs\right)
=\frac{z_q^k\,\bar z_q^h}{\sqrt{[k]_q!\,[h]_q!}}.
\]
Hence $\mathcal{B}_q^{(2)}$ maps the orthonormal tensor basis $\{\widetilde H^q_k\otimes \widetilde H^q_h\}$ onto the orthonormal basis 
$\{\frac{z_q^k\bar z_q^h}{\sqrt{[k]_q!\,[h]_q!}}\}$ of $\mathcal{F}^{(2)}_q(\mathbb{C})$, so it is an isometry on finite spans and extends by continuity to a surjective isometry between the completions.
\end{proof}

\begin{remark}
Setting
\[
\Phi_{z_q}(t):=A(z_q,t)=\sum_{n\ge0}\frac{z_q^n}{\sqrt{[n]_q!}}\ \widetilde H^q_n(t),
\]
the kernel factorizes as $A^{(2)}(z_q,\bar z_q;\,t,s)=\Phi_{z_q}(t)\,\Phi_{\bar z_q}(s)$. 
Thus $\{\Phi_{z_q}\}_{z\in\mathbb{C}}$ may be regarded as a family of $q$–coherent state vectors for the analysis operator $\mathcal{B}_q$, and $A^{(2)}$ provides the natural tensor product coherent kernel. 
\end{remark}
\section*{References}


\begin{thebibliography}{90}
\bibitem{ACKS}
D. Alpay, P. Cerejeiras, U. Kaehler, B. Schneider. Generalized $q$-Fock spaces and structural identities. Proc. Amer. Math. Soc. 152 (2024), 2463--2478 \doi{10.1090/proc/16720}
\bibitem{ADDM}
D. Alpay, A. De Martino, K. Diki. New aspects of Bargmann transform using Touchard polynomials and hypergeometric functions. Canadian Journal of Mathematics. Published online 2025:1--46. \doi{10.4153/S0008414X24001019}
\bibitem{Am} A. Altavilla, S. Bernstein, M. L. Zimmermann. \textit{Peculiar Behaviors of Functions in $q$-Calculus}. In N. Faustino, M. dos Santos Ferreira, U. Kaehler, N. Vieira (eds.), Hypercomplex Analysis and Its Applications - Extended Abstracts of the International Conference Celebrating Paula Cerejeiras 60th Birthday, Trends in Mathematics, Research Perspectives Ghent Analysis and PDE Center 9, SpringerNature/Birkhaeuser, Berlin, 2025.


\bibitem{Barg1961}  V. Bargmann, On a Hilbert space of analytic functions and an associated integral transform,
Comm. Pure Appl. Math., 14 1961, 187--214. \doi{10.1002/cpa.3160140303}
\bibitem{Moore-Aronszajn} A. Berlinet and C. Thomas-Agnan. \textit{Reproducing Kernel Hilbert Spaces in Probability and Statistics.} Springer New York, NY, 2003. \doi{10.1007/978-1-4419-9096-9}.
\bibitem{BAZ} S. Bernstein, A. Altavilla,  M. L. Zimmermann. Conjugate $(1/q, q)$-harmonic Polynomials in $q$-Clifford Analysis. Accepted for publication in \textit{Trends in Mathematics}.
Preprint available on arXiv:2504.09585.
\bibitem{CoSo2010} K. Coulembier and F. Sommen. \textit{Q-deformed harmonic and Clifford analysis and the q-Hermite and Laguerre polynomials.} Journal of Physics A: Mathematical and Theoretical, 43(11), February 2010. \doi{10.1088/1751-8113/43/11/115202}.
\bibitem{Ernst2012} T. Ernst. \textit{A Comprehensive Treatment of q-Calculus.} Birkh\"auser, Basel, 2012. \doi{10.1007/978-3-0348-0431-8}.
\bibitem{Exton1980}
H. Exton, A Basic Analogue of Hermite's Equation, IMA Journal of Applied Mathematics, Volume 26, Issue 3, November 1980, Pages 315--320, \doi{https://doi.org/10.1093/imamat/26.3.315}
\bibitem{Exton1983} H. Exton. \textit{q-Hypergeometric Functions and Applications}. Chichester: Horwood, 1983.

\bibitem{Floreanini1991} R. Floreanini and L. Vinet. q-Orthogonal polynomials and the oscillator quantum group. Lett Math Phys 22, 45 -- 54, 1991. \doi{10.1007/BF00400377}.

\bibitem{mathexc}P.~Frost, \emph{Answer to: Prove the complex polynomial $P(z,\bar z)$ is zero if and only if all the coefficients are zero}, 
\textit{Mathematics Stack Exchange} (Nov.~13, 2020). 
Available at \url{https://math.stackexchange.com/questions/3903911/prove-the-complex-polynomial-pz-barz-is-zero-if-and-only-if-all-the-coeff}. 
Accessed Sep.~26, 2025.
\bibitem{Gasper2009} G. Gasper and M. Rahman. \textit{Basic Hypergeometric Series.} Cambridge University Press, 2009. \doi{10.1017/CBO9780511526251}.

\bibitem{Ghanmi}
A. Ghanmi. Operational formulae for the complex Hermite polynomials $H_{p,q}(z, \bar z)$. Integral Transforms and Special Functions, 24(11), 2013, 884--895. \doi{https://doi.org/10.1080/10652469.2013.772172}

\bibitem{GonzCe}
J. O. Gonz{\'a}lez-Cervantes, J. Bory-Reyes, I. Sabadini. \textit{A Borel-Pompeiu formula in a $(q,q^\prime)$-model of quaternionic analysis.} J. Math. Phys. 66, 023507 (2025). \doi{10.1063/5.0196814}.
\bibitem{harkin}
A. A. Harkin, J.B. Harkin. (2004). Geometry of Generalized Complex Numbers. Mathematics Magazine, 77(2), 118--129. \doi{10.1080/0025570X.2004.11953236}.
\bibitem{Jackson1909} F. H. Jackson. On q-Functions and a certain Difference Operator. Transactions of the Royal Society of Edinburgh, 46(2):253--281, 1909. \doi{10.1017/S0080456800002751}.
\bibitem{Jackson1910}
F.H. Jackson. (1910) On Q-Definite Integrals. The Quarterly Journal of Pure and Applied Mathematics, 41, 193--203.

\bibitem{Kac2002} V. Kac and P. Cheung. Quantum Calculus. Universitext. Springer New York,
NY, 2002. \doi{10.1007/978-1-4613-0071-7}.


\bibitem{Pashaev2014} O.K. Pashaev and  S. Nalci. {q-analytic functions, fractals and generalized analytic functions}. Journal of Physics A: Mathematical and Theoretical, 47(4), 2014. 
\doi{10.1088/1751-8113/47/4/045204.9}
\bibitem{Sudbery1979} A. Sudbery. Quaternionic analysis. Mathematical Proceedings of the Cambridge Philosophical Society, 85(2):199--225, 1979. \doi{10.1017/S0305004100055638}.
\bibitem{Turner2016} S. N. Turner and O. K. Pashaev. From q-analytic functions to double q-analytic Hermite binomials and q-traveling waves. Journal of Physics: Conference Series,
766, 2016. \doi{10.1088/1742-6596/766/1/012017}.
\bibitem{Vekua1962} I. N. Vekua. Generalized Analytic Functions. Pergamon, 1962. \doi{10.1016/C2013-0-05289-9}.
	\bibitem{BeZiSchn2022} M. L. Zimmermann, S. Bernstein and B. Schneider. \textit{General Aspects of Jackson Calculus in Clifford Analysis}, Adv. Appl. Clifford Algebras \textbf{35}, 14 (2025),  \doi{10.1007/s00006-025-01374-x}
\end{thebibliography}
\end{document}